%
%

\documentclass[a4paper,10pt]{amsart}
\usepackage{amsfonts,amssymb,amsmath} 
\usepackage{xcolor}
\usepackage{graphicx}
\usepackage{bbm}

\newcommand{\R}{\mathbbm{R}}

\newcommand{\pl}{\partial}

\newcommand{\Z}{\mathbbm{Z}}

\newcommand{\s}{\sigma}
\newcommand{\la}{\lambda}
\newcommand{\mi}{t\wedge s}
\newcommand{\ind}{\mathbbm{1}}
\newcommand{\e}{\varepsilon}

\newcommand{\Sp}{\mathbb{S}^{1}}
\newcommand{\Spd}{\mathbb{S}^{d-1}}

\newcommand{\xis}{\xi_{\sigma}}
\newcommand{\ls}{\lambda_{\sigma}}

\newtheorem{main}{Theorem}

\newtheorem{Lemma}{Lemma}
\newtheorem{Prop}{Proposition}


\begin{document}

\title[Periodic homogenization for linear Boltzmann equation]{Homogenization\\ 
of the linear Boltzmann equation\\ in a domain with a periodic distribution of holes}

\author[E. Bernard]{Etienne Bernard}
\address[E. B.]%
{Ecole polytechnique\\
Centre de math\'ematiques L. Schwartz\\
F91128 Palaiseau cedex\\
\& Universit\'e P.-et-M. Curie\\
Laboratoire J.-L. Lions, BP 187\\
75252 Paris cedex 05} 

\email{etienne.bernard@math.polytechnique.fr}

\author[E. Caglioti]{Emanuele Caglioti}
\address[E. C.]%
{Universit\`a di Roma ``La Sapienza"\\
Dipartimento di Matematica\\
p.le Aldo Moro 5\\
00185 Roma, Italia} 

\email{caglioti@mat.uniroma1.it}

\author[F. Golse]{Fran\c cois Golse}
\address[F. G.]%
{Ecole polytechnique\\
Centre de math\'ematiques L. Schwartz\\
F91128 Palaiseau cedex\\
\& Universit\'e P.-et-M. Curie\\
Laboratoire J.-L. Lions, BP 187\\
F75252 Paris cedex 05} 

\email{francois.golse@math.polytechnique.fr}

\keywords{Linear Boltzmann equation, Periodic homogenization, Periodic Lorentz gas, 
Renewal equation}

\subjclass[2000]{82C70, 35B27 (82C40, 60K05)}

\begin{abstract}
Consider a linear Boltzmann equation posed on the Euclidian plane with a periodic
system of circular holes and for particles moving at speed $1$. Assuming that the 
holes are absorbing --- i.e. that particles falling in a hole remain trapped there forever, 
we discuss the homogenization limit of that equation in the case where the reciprocal
number of holes per unit surface and the length of the circumference of each hole 
are asymptotically equivalent small quantities. We show that the mass loss rate due 
to particles falling into the holes is governed by a renewal equation that involves the 
distribution of free-path lengths for the periodic Lorentz gas. In particular, it is proved
that the total mass of the particle system at time $t$ decays exponentially fast as
$t\to+\infty$. This is at variance with the collisionless case discussed in [Caglioti, E.,
Golse, F., Commun. Math. Phys. \textbf{236} (2003), 199--221], where the total mass
decays as $\hbox{Const.}/t$ as $t\to+\infty$.
\end{abstract}

\maketitle


\section{Introduction}

The homogenization of a transport process describing the motion of particles in a
system of fixed obstacles --- such as scatterers, or holes --- leads to very different
results according to whether the distribution of obstacles is periodic or random.
Before describing the specific problem analyzed in the present work, we recall a
few results recently obtained on a more complicated, and yet related problem.

An important example of the phenomenon mentioned above is the Boltzmann-Grad
limit of the Lorentz gas. The Lorentz gas is the dynamical system corresponding to 
the free motion of a single point particle in a system of fixed spherical obstacles,
assuming that each collision of the particle with any one of the obstacles is purely
elastic. Since the particle is not subject to any external force, we assume without
loss of generality that its speed is $1$. The Boltzmann-Grad limit is the scaling limit 
where the obstacle radius and the reciprocal number of obstacles per 
unit volume vanish in such a way that the average free path length of the particle 
between two consecutive collisions with the obstacles is of the order of unity.

Call $f(t,x,v)$ the particle distribution function in phase space in that scaling limit 
--- in other words,
the probability that the particle be located in an infinitesimal volume $dx$ around
the position $x$ with direction in an infinitesimal element of solid angle $dv$
around the direction $v$ at time $t\ge 0$ is $f(t,x,v)dxdv$.

In the case of a random system of obstacles --- more precisely, assuming that the
obstacles centers are independent and distributed in the $3$-dimensional Euclidian 
space under Poisson's law --- Gallavotti proved in \cite{Gallavotti1969,Gallavotti1972} 
(see also \cite{Gallavotti1999} on pp. 48--55) that the average of $f$ over obstacle 
configurations (i.e. the mathematical expectation of $f$) is a solution of the linear 
Boltzmann equation
$$
(\pl_{t}+v\cdot\nabla_{x}+\s)f(t,x,v)=
\frac{\s}{\pi}\int_{\omega\cdot v>0\atop|\omega|=1}
	f(t,x,v-2(\omega\cdot v)\omega)\omega\cdot vd\omega\,.
$$

If, on the contrary, the obstacles are periodically distributed --- specifically, if they
are centered at the vertices of a cubic lattice --- the limiting particle distribution 
function $f$ cannot be the solution of any linear Boltzmann equation of the form
$$
(\pl_{t}+v\cdot\nabla_{x}+\s)f(t,x,v)=\s\int_{|w|=1}p(v|w)f(t,x,w)dw\,,
$$
where $p$ is a continuous, symmetric transition probability density on the unit
sphere: see \cite{Golse2008} for a complete proof of this negative result, based on 
earlier estimates on the distribution of free path lengths for the periodic Lorentz
gas \cite{BGW,GW}. 

The correct limiting equation for the Boltzmann-Grad limit of the periodic Lorentz
gas was found only very recently: see \cite{CagliotiGolse08,MarklofStromb2008}.
In the $2$-dimensional case, the most striking feature of the theory presented in 
these references, is that the limiting equation is set on an \textit{extended phase space}
involving not only the particle position $x$ and direction $v$, as in all classical 
kinetic models, but also the (rescaled) distance $s$ to the next collision point with 
the obstacles and the impact parameter $h$ at this next collision point. 

The particle motion is described in terms of its distribution function in this extended 
phase space, $F\equiv F(t,x,v,s,h)$, which is governed by an equation of the form
$$
(\pl_{t}+v\cdot\nabla_{x}-\pl_{s})F(t,x,v,s,h)
	=\int_{-1}^1P(s,h|h')F(t,x,R[\pi-2\arcsin(h')]v,0,h')dh'
$$
where $R[\theta]$ designates the rotation of an angle $\theta$, and $P(s,h|h')$ is a 
nonnegative integral kernel whose explicit expression is given in \cite{CagliotiGolse08}
but is of little interest for the present discussion. The particle distribution function in the
classical phase space of kinetic theory is recovered in terms of $F$ by the following 
formula:
$$
f(t,x,v)=\int_0^{+\infty}\int_{-1}^1F(t,x,v,s,h)dhds\,.
$$
However, the particle distribution function $f$ itself does not satisfy a linear Boltzmann 
equation in closed form.

Loosely speaking, in the case of a periodic distribution of obstacles, the particle
``feels" the correlations between the obstacles, since its trajectory consists of
segments of maximal length avoiding the obstacles. This explains the need for
an extended phase space in order to describe the Boltzmann-Grad limit of the
Lorentz gas, in the periodic case. In the random case studied by Gallavotti, the 
obstacles centers are assumed to be independent, which reduces the complexity
of the limiting dynamics.

\smallskip
In the present work, we shall study a much simpler homogenization problem, 
which can be formulated as follows:

\smallskip
\noindent
\textbf{Problem.} Consider a system of point particles whose distribution function 
is governed by a linear Boltzmann equation. The particles are assumed to move 
in a periodic system of holes. Describe the asymptotic behavior of the total mass
of the particle system in the long time limit, assuming that the radius of the holes
and their reciprocal number per unit volume vanish so that the average distance
between holes is of the order of $1$.

\smallskip
Although the underlying dynamics in this problem is a lot simpler than that of the 
Lorentz gas, the homogenized equation is also set on an extended phase space, 
analogous to the one described above. 

A we shall see, the mathematical derivation of the homogenized equation in the
extended phase space for the problem above involves only very elementary arguments 
from functional analysis --- at variance with the case of the Boltzmann-Grad limit 
of the Lorentz gas, which requires a fairly detailed knowledge of particle trajectories.

\section{The model}

We consider the monokinetic, linear Boltzmann equation
\begin{equation}
\label{Boltz}
\pl_{t}f_{\e}+v\cdot\nabla_{x}f_{\e}+\s(f_{\e}-Kf_{\e})=0
\end{equation}
in space dimension 2.

The unknown function $f(t,x,v)$ is the density at time $t\in\R_{+}$ of particles
with velocity $v\in\Sp$, located at $x\in \R^{2}$. For each $\phi\in L^2(\Sp)$, 
we denote
$$
K\phi(v):=\frac{1}{2\pi}\int_{\Sp}k(v,w)\phi(w)dw,
$$ 
where $dw$ is the uniform measure (arc length) on the unit circle $\Sp$. We 
henceforth assume that
\begin{equation}
\label{kdef}
\begin{aligned}
k\in L^{2}(\Sp\times\Sp)\,,\quad&k(v,w)=k(w,v)\geq0\ \hbox{ a.e. in\ }v,w\in\Sp
\\
&\mbox{and\ }\tfrac{1}{2\pi}\int_{\Sp}k(v,w)dw=1\ a.e.\ in\ v\in\Sp.
\end{aligned}
\end{equation}
The case of isotropic scattering, where $k$ is a constant, is a classical model in 
the context of Radiative Transfer. Likewise, the case of Thomson scattering in
Radiative Transfer involves the integral kernel
$$
k(v,w)=\tfrac3{16}(1+(v\cdot w)^2)
$$
--- see for instance chapter I, \S 16 of \cite{Chandra60}. Finally, the collision 
frequency is a constant $\s > 0$. 

The linear Boltzmann equation (\ref{Boltz}) is set on the spatial domain $Z_{\e}$, 
i.e. the space $\R^{2}$ with a periodic system of holes removed:
$$
Z_{\e}:=\left\lbrace x\in\R^{2}\,|\,\hbox{dist}(x,\e\mathbbm{Z}^{2})>\e^{2}\right\rbrace\,. 
$$ 
We assume an absorption boundary condition on $\pl Z_{\e}$:
$$
f_{\e}=0\ \mbox{for\ }(t,x,v)\in\R^{*}_{+}\times\partial Z_{\e}\times\Sp,
	\ \mbox{whenever\ }v\cdot n_{x}>0\,,
$$ 
where $n_{x}$ denotes the inward unit normal vector to $Z_{\e}$ at the point
$x\in\pl Z_{\e}$. This condition means that a particle falling into any one of the 
holes remain there forever.

The same problem could of course be considered in any space dimension. Notice
however that, in space dimension $N\ge 2$, the appropriate scaling, analogous 
to the one considered here, would be to consider holes of radius $\e^{N/(N-1)}$ 
centered at the points of the cubic lattice $\e\mathbbm{Z}^N$ --- see for instance
\cite{BGW,GW}. Most of the arguments considered in the present paper can be
adapted without change to the higher dimensional case, except that the expression
of one particular coefficient appearing in the homogenized equation is not yet known
explicitly at the time of this writing.

The most natural question related to the dynamics of the system above is the
asymptotic behavior of the total mass of the particle system in the small obstacle
radius $\e\ll 1$ and long time limit.

The last two authors  have considered in \cite{Golse-Caglioti} the non-collisional 
case ($\s=0$) and proved that, in the limit as $\e\to 0^+$, the solution $f_{\e}$ 
converges in $L^{\infty}(\R_{+}\times\R^{2}\times\Sp)$ weak-* to a solution $f$ of 
the following non-autonomous equation:
\begin{equation}
\label{Boltza}
\pl_{t}f+v\cdot\nabla_{x}f=\frac{\dot{p}(t)}{p(t)}f\,,
\end{equation}
where $p$ is a positive decreasing function defined below. In that case, the
total mass of the particle system decays like $\hbox{Const.}/t$ as $t\to+\infty$.

Observe that, starting from the free transport equation, we obtain a non-autono- mous 
(in time) equation in the small $\e$ limit. In particular, the solution of equation (\ref{Boltza}) 
cannot be given by a semigroup in a function space such as $L^{p}(\R^{2}_{x}\times\Sp_{v})$.
As we shall see, the homogenization of the linear Boltzmann equation in the collisional
case ($\s>0$) leads to an even more spectacular change of structure in the equivalent
equation obtained in the limit. 

The work of the last two authors \cite{Golse-Caglioti} relies upon an explicit computation of 
the solution of the free transport equation, where the effect of the system of holes is handled 
with continued fraction techniques. In the present paper, we investigate the analogous 
homogenization problem in the collisional case ($\s>0$). As we shall see, there is no explicit 
representation formula for the solution of the linear Boltzmann equation, other than the one 
based on the transport process, a particular stochastic process, defined for example in 
\cite{PapanicoBAMS75}. 

This representation formula was used in a previous work of the first author \cite{EBMajorMass}, 
who established a uniform in $\e$ upper bound for the total mass of the particle system by 
a quantity of the form $\hbox{Const.}e^{-a_\s t}$ for some $a_\s>0$. This exponential decay 
is quite remarkable: indeed, there is a ``phase transition" between the collisionless case in 
which the total mass decays algebraically as $t\to+\infty$, and the collisional case in 
which the total mass decays at least exponentially fast in that same limit. 

In the present paper, we further investigate this phenomenon and show that the exponential
decay estimate found in \cite{EBMajorMass} is sharp, by giving an asymptotic equivalent of 
the total mass of the particle system in the small $\e$ limit as $t\to+\infty$.

Instead of the semi-explicit representation formula by the transport process, our argument
is based on the very special structure of the homogenized problem. The key observation
in the present work is that this homogenized problem involves a renewal equation, for which 
exponential decay is a classical result that can be found in classical monographs such as 
\cite{Feller}.

\section{The main results}

First, we recall the definition of the free path length in the direction $v$ for a particle starting 
from $x$ in $Z_{\e}$:
\begin{equation}
\label{length}
\tau_{\e}(x,v):=\text{inf}\left\lbrace t>0\,|\,x-tv\in\pl Z_{\e}\right\rbrace\,.
\end{equation}

The distribution of free path length has been studied in \cite{BGW,GW,Golse-Caglioti,BZ}.
In particular, it is proved that, for each arc $I\subset\Sp$ and each $t\ge 0$, one has
\begin{equation}
\label{BZLim}
\hbox{meas}(\{(x,v)\in(Z_{\e}\cap[0,1]^2)\times I\,|\,\e\tau_{\e}(x,v)>t\})\to p(t)|I|
\end{equation}
as $\e\to 0^+$, where $|I|$ denotes the length of $I$ and the measure considered in the
statement above is the uniform measure on $[0,1]^2\times\Sp$.

The following estimate for $p$ can be found in \cite{BGW}: there exist $C,C'>0$ such 
that, for all $t\geq 1$:
\begin{equation}
\label{BGWUnifBound}
\frac{C}{t}\leq\hbox{meas}(\{(x,v)\in(Z_{\e}\cap[0,1]^2)\times I\,|\,\e\tau_{\e}(x,v)>t\})\leq \frac{C'}{t}
\end{equation}
uniformly as $\e\to 0^+$, so that
\begin{equation}
\label{BGWBound}
\frac{C}{t}\leq p(t)\leq \frac{C'}{t}\,.
\end{equation}
In \cite{BZ} F. Boca and A. Zaharescu have obtained an explicit formula for $p$: 
\begin{equation}
\label{BZtwo}
p(t)=\int_{t}^{+\infty}(\tau-t)\Upsilon(\tau)d\tau\,,
\end{equation}
where the function $\Upsilon$ is expressed as follows: 
\begin{equation}
\label{BZone}
\Upsilon(t) = \frac{24}{\pi^{2}}\left\{
\begin{array}{ll}
1 & \mbox{if }\ t\in(0,\tfrac12], 
\\	\\
\frac{1}{2t}+2(1-\frac{1}{2t})^{2}\ln(1-\frac{1}{2t})-\frac{1}{2}(1-\frac{1}{t})^{2}\ln|1-\frac{1}{t}|  
	& \mbox{if }\ t\in(\tfrac12,+\infty)\,.
\end{array}
\right.
\end{equation}

\smallskip
This is precisely at this point that the case of space dimension $2$ differs from the higher 
dimensional case. Indeed, in space dimension higher than $2$, the existence of the limit
(\ref{BZLim}) has been proved in \cite{MarklofStromb2007}, while the uniform estimate 
analogous to (\ref{BGWUnifBound}) is to be found in \cite{GW}. However, no explicit formula
analogous to (\ref{BZtwo}) is known in that case, at least at the time of this writing. We have 
chosen to treat in the present paper only the case of the square lattice in space dimension
$2$ as it is the only case where the limit (\ref{BZLim}-\ref{BZtwo}) is known completely.

\begin{figure}
\label{Fig-p}
\begin{center}

\includegraphics[width=6.8cm]{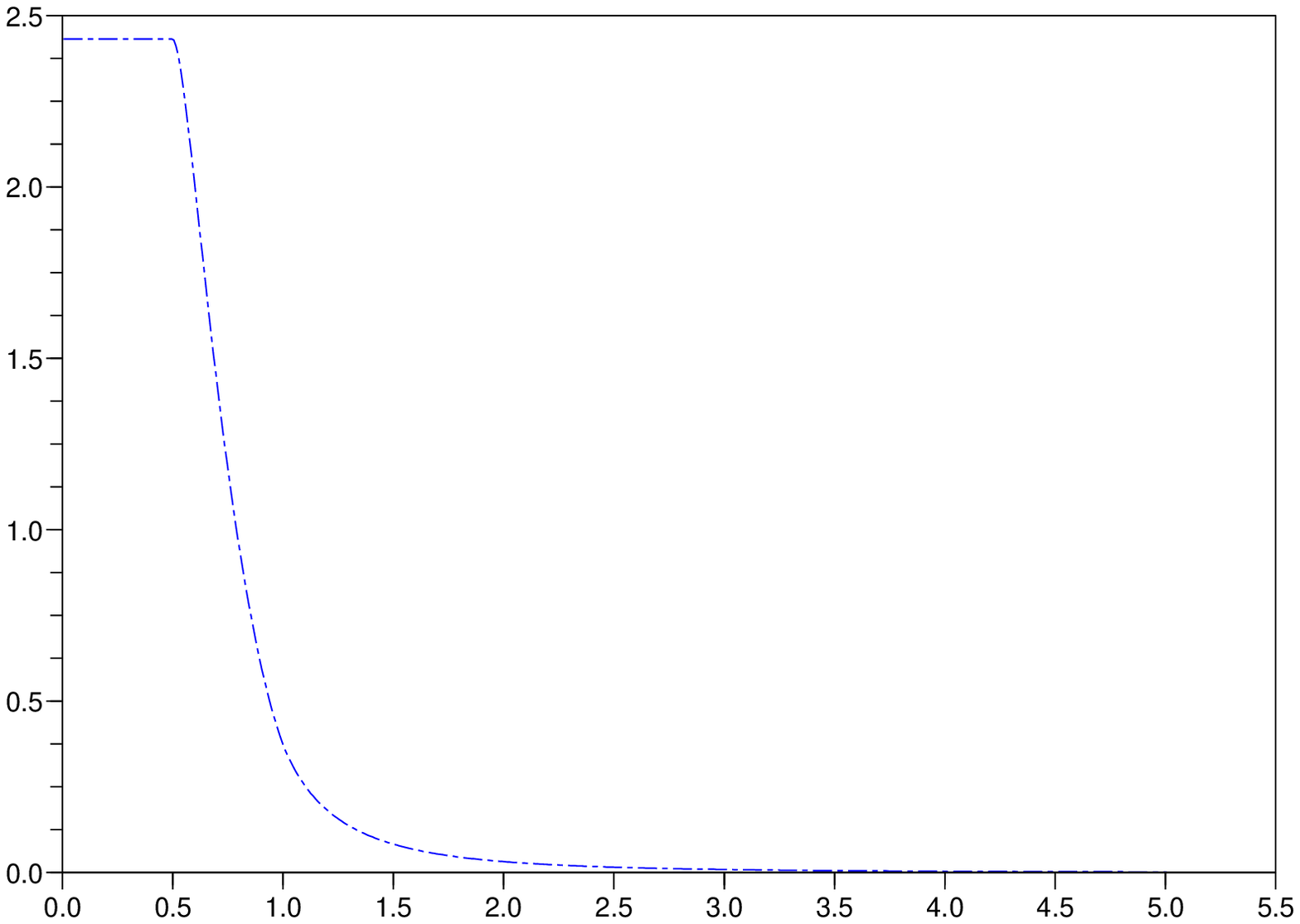}\!\!\!!\!\!\!\!\!\!\!\includegraphics[width=6.8cm]{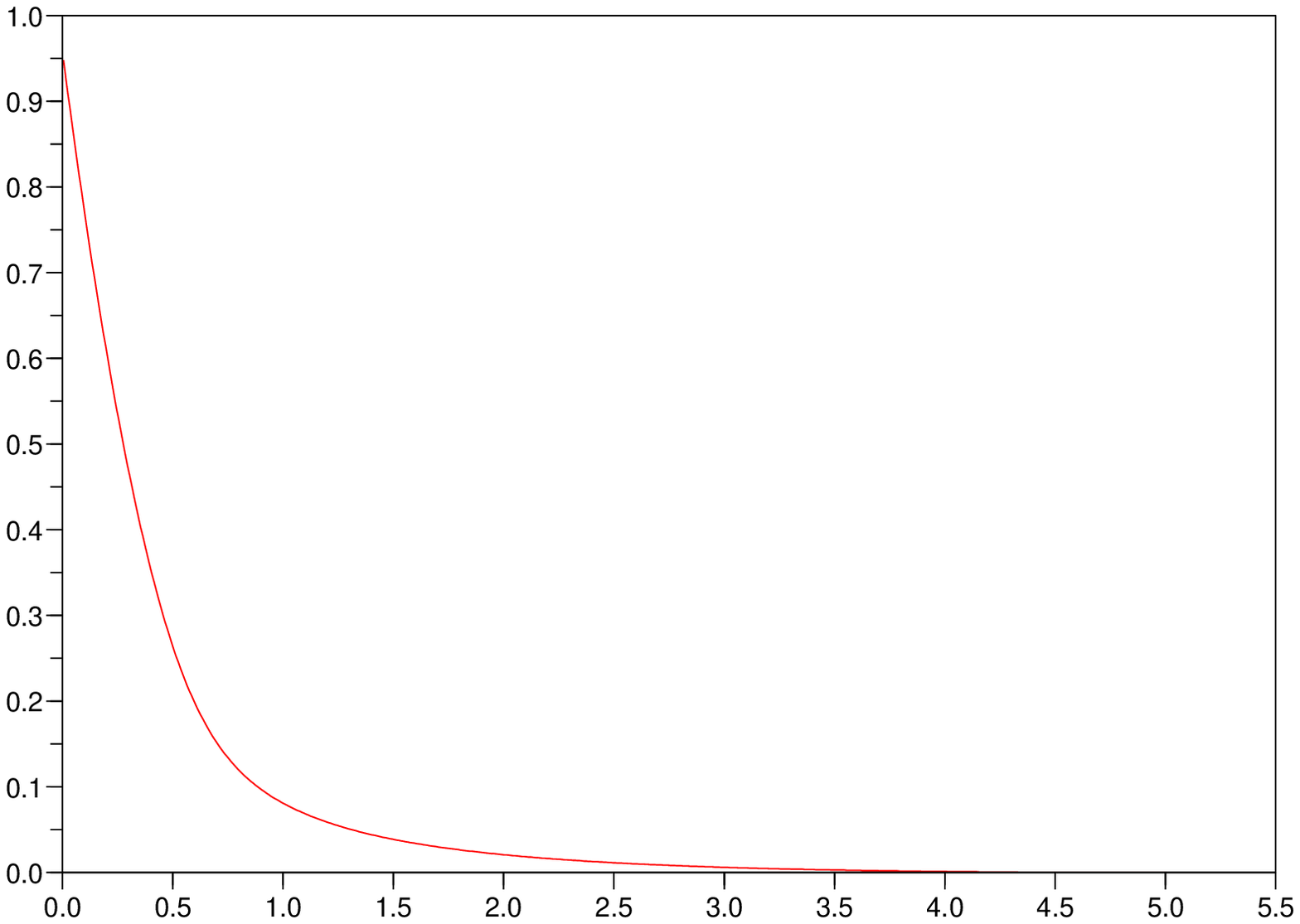}

\caption{The graphs of $\Upsilon$ (left) and of  $p$ (right)}
\end{center}
\end{figure}

Throughout this paper, we assume that the initial data of ($\Xi_\e$)  satisfies the 
assumption
\begin{equation}
\label{condinitiale}
f^{in}\geq0\ \mbox{on}\ \R^{2}\times\Sp\ \mbox{and} 
	\iint_{\R^{2}\times\Sp}f^{in}(x,v)dxdv+\sup_{(x,v)\in\R^{2}\times\Sp}f^{in}(x,v)<+\infty.
\end{equation}
For each $0< \e\ll 1$, let $f_{\e}$ be the (mild) solution of the initial boundary value 
problem
$$
(\Xi_{\e})\left\{
\begin{array}{ll}
\pl_{t}f_{\e}+v\cdot\nabla_{x}f_{\e}+\s(f_{\e}-Kf_{\e})=0, &\ (x,v)\in Z_{\e}\times\Sp,t>0,
\\	\\
f_{\e}=0,\ \mbox{if\ }v\cdot n_{x}>0,&\ (x,v) \in \pl Z_{\e}\times\Sp,
\\	\\
f_{\e}(0,x,v)=f^{in}(x,v),&\ (x,v) \in Z_{\e}\times\Sp.
\end{array}
\right.
$$
The classical theory of the linear Boltzmann equation guarantees the existence and
uniqueness of a mild solution $f_\e$ of the problem ($\Xi_\e$) satisfying 
\begin{equation}
\label{XiBounds}
\begin{aligned}
0\le f_\e(t,x,v)\le\sup_{(x,v)\in\R^2\times\Sp}f^{in}(x,v)\quad\hbox{ a.e. on }\R_+\times Z_\e\times\Sp\,,
\\
\iint_{Z_\e\times\Sp}f_\e(t,x,v)dxdv\le\iint_{\R^2\times\Sp}f^{in}(x,v)dxdv\,.
\end{aligned}
\end{equation}

Consider next $F:=F(t,s,x,v)$ the solution of the Cauchy problem
$$
(\Sigma)\left\{
\begin{array}{ll}
\pl_{t}F+v\cdot\nabla_{x}F+\pl_{s}F=-\s F+\frac{\dot{p}}{p}(\mi)F,\ 
	& t,s>0, (x,v)\in\R^{2}\times\Sp\,,
\\	\\
F(t,0,x,v)=\s\displaystyle\int_{0}^{+\infty}KF(t,s,x,v)ds,\ & t>0, (x,v)\in\R^{2}\times\Sp\,,
\\	\\
F(0,s,x,v)=\s e^{-\s s}f^{in}(x,v),\ & s>0, (x,v)\in\R^{2}\times\Sp\,,
\end{array}
\right.$$
with the notation $\mi:=\min(t,s)$. Notice that $F$ is a density defined on the extended 
phase space:
$$
\left\lbrace (s,x,v)|s\geq0,x\in\R^{2},v\in\Sp\right\rbrace
$$
involving the extra variable $s$, whose interpretation is given below.

\smallskip
Henceforth, we shall frequently need to extend functions defined a.e. on $Z_\e$ by $0$ 
inside the holes (that is, in the complement of $\overline{Z_\e}$). We therefore introduce 
the following piece of notation.

\paragraph{\underline{Definition}:} 
For each function $\varphi\equiv\varphi(x)$ defined a.e. on $Z_\e$, we denote
$$
\left\lbrace\varphi\right\rbrace(x) 
=\left\{
\begin{array}{ll} 
\varphi(x) &\ \mbox{if } x\in Z_{\e},
\\
0 &\ \mbox{if } x\notin\overline{Z_{\e}},
\end{array}
\right.
$$
We use the same notation $\{f_\e\}$ or $\{F_\e\}$ to designate the same extension by
$0$ inside the holes for functions defined on cartesian products involving $Z_\e$ as
one of their factors, such as $\R_+\times Z_\e\times\Sp$ in the case of $f_\e$, and
$\R_+\times\R_+\times Z_\e\times\Sp$ in the case of $F_\e$.

\smallskip
Our first main main result is

\begin{main}\label{premiertheoreme}
Under the assumptions above,
$$
\{f_{\e}\}\rightharpoonup\int_{0}^{+\infty}Fds
$$
in $L^{\infty}(\R_{+}\times\R^{2}\times\Sp)$ weak-$*$ as $\e\rightarrow0^{+}$, where
$F$ is the unique (mild) solution of ( $\Sigma$).
\end{main}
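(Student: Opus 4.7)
The plan is to lift the problem to the extended phase space by introducing the auxiliary density
\begin{equation*}
F_\e(t,s,x,v):=\s e^{-\s s}\,\ind_{\{\tau_\e(x,v)>\mi\}}\!\left[\ind_{\{s\le t\}}Kf_\e(t{-}s,x{-}sv,v)+\ind_{\{s>t\}}f^{in}(x{-}tv,v)\right],
\end{equation*}
read off directly from the Duhamel (mild) formula for $f_\e$ solution of $(\Xi_\e)$; integrating against $ds$ recovers exactly $\{f_\e\}$. It will therefore be enough to prove that $\{F_\e\}\rightharpoonup F$ weak-$*$ in $L^\infty(\R_+\times\R_+\times\R^2\times\Sp)$ with $F$ the mild solution of $(\Sigma)$, and to conclude by integrating in $s$. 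A crucial observation is that, on each side of the hypersurface $\{s=t\}$, the three factors $\ind_{\{\tau_\e>\mi\}}$, $Kf_\e(t{-}s,x{-}sv,v)$ and $f^{in}(x{-}tv,v)$ are all invariant along the characteristic flow $(t,s,x)\mapsto(t+h,s+h,x+hv)$, so $F_\e$ solves the much simpler equation $(\partial_t+v\cdot\nabla_x+\partial_s)F_\e=-\s F_\e$, together with the boundary condition $F_\e(t,0,x,v)=\s\int_0^{+\infty}KF_\e\,ds$ at $s=0$ and the initial condition $F_\e(0,s,x,v)=\s e^{-\s s}f^{in}\ind_{Z_\e}$. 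The whole geometric content of the perforated domain is concentrated in the single factor $\ind_{\{\tau_\e>\mi\}}$.

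Next I would pass to the limit $\e\to0^+$. The uniform bound (\ref{XiBounds}) gives $\|F_\e\|_{L^\infty}\le C$, so a subsequence of $\{F_\e\}$ converges weak-$*$ to some $F$. The key input is (\ref{BZLim}), by which $\ind_{\{\tau_\e(x,v)>\mi\}}\rightharpoonup p(\mi)$ weak-$*$. The main obstacle is to pass to the limit in the product of this weakly convergent indicator with the $\e$-dependent factor $Kf_\e(t{-}s,x{-}sv,v)$: the indicator oscillates at the microscopic lattice scale, while $Kf_\e$ should remain macroscopic, which one obtains by combining (\ref{XiBounds}) with a velocity-averaging lemma applied to the transport part of $(\Xi_\e)$ (the integration in $v$ built into $K$ produces strong local compactness in $(t,x)$). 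Pairing a strongly convergent macroscopic factor with the weakly convergent indicator then identifies
\begin{equation*}
F(t,s,x,v)=\s e^{-\s s}p(\mi)\!\left[\ind_{\{s\le t\}}Kf(t{-}s,x{-}sv,v)+\ind_{\{s>t\}}f^{in}(x{-}tv,v)\right],
\end{equation*}
where $f(t,x,v):=\int_0^{+\infty}F(t,s,x,v)\,ds$.

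It remains to verify that $F$ is a mild solution of $(\Sigma)$ and to invoke uniqueness. In the above formula the bracket and $\s e^{-\s s}$ play the same role as in $F_\e$, and the only new non-invariant ingredient is the factor $p(\mi)$; a direct computation yields $(\partial_t+\partial_s)p(\mi)=\dot p(\mi)$ on each side of $\{s=t\}$, which produces exactly the extra term $\frac{\dot p}{p}(\mi)F$ and turns the simpler equation satisfied by $F_\e$ into the PDE of $(\Sigma)$. The boundary condition at $s=0$ and the initial condition at $t=0$ follow from $p(0)=1$ by direct inspection of the formula. Uniqueness of mild solutions of $(\Sigma)$ (which, after integration along characteristics, reduces to a Volterra renewal equation treated by a standard contraction argument) then identifies the full weak-$*$ limit, and integration in $s$ yields the announced convergence $\{f_\e\}\rightharpoonup\int_0^{+\infty}F\,ds$. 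The hard part is precisely the strong compactness needed to handle the product of two $\e$-oscillating factors.
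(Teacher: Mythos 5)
Your proposal is correct and follows essentially the same route as the paper: you lift $f_\e$ to the extended phase space via the same Duhamel formula (this is exactly the mild solution of the paper's auxiliary problem $(\Sigma_\e)$, with $\int_0^{+\infty}F_\e\,ds=f_\e$), use the weak-$*$ convergence of the free-path indicators (the paper's Lemma~\ref{Golse-Caglioti}), obtain strong $L^1_{loc}$ compactness of $K\{f_\e\}$ by velocity averaging applied to the transport equation for $\{f_\e\}$ (the paper's Lemmas~\ref{eqg}--\ref{averging}, which also need the uniform control of the boundary measure term), and identify the limit as the mild solution of $(\Sigma)$ by uniqueness (the paper's Proposition~\ref{eqF}). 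The only place where the paper is more careful than your sketch is in passing the $s$-integration through the weak limit, which it does by splitting into the compactly supported piece $F_{1,\e}$ (supported in $s\le t$) and the explicit piece $F_{2,\e}$; this is a routine refinement of what you wrote.
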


Notice that the limit of the (extended) distribution function of the particle system is indeed
defined in terms of the solution $F$ of the homogenized integro-differential equation 
($\Sigma$). However, it does not seem that the limit of $\{f_\e\}$ itself satisfies any natural
equation.

\smallskip
Next we discuss the asymptotic decay as $t\to+\infty$ of the total mass of the particle
system in the homogenization limit $\e\ll 1$. Obviously, the particle system loses mass 
due to particles falling into the holes.

In order to do so, we introduce the quantity:
$$
m(t,s):=\tfrac{1}{2\pi}\iint_{\R^{2}\times\Sp} F(t,s,x,v)dxdv\,.
$$
A key observation in our work is that $m$ is the solution of a renewal type PDE, as 
explained in the next proposition.

\begin{Prop}\label{renouv}
Denote
$$
B(t,s)=\s-\frac{\dot{p}}{p}(\mi)\,,
$$
and assume that $f^{in}$ satisfies the condition (\ref{condinitiale}). 

Then the renewal PDE
$$
\left\{
\begin{array}{ll}
\pl_{t}\mu(t,s)+\pl_{s}\mu(t,s)+B(t,s)\mu(t,s)=0,\ &\ t,s>0\,,
\\	\\	
\mu(t,0)=\s\displaystyle\int_{0}^{+\infty}\mu(t,s)ds,\ &\ t>0\,,
\\	\\	
\mu(0,s)=\s e^{-\s s},\ &\ s>0\,,
\end{array}
\right.
$$
has a unique mild solution $\mu\in L^\infty([0,T];L^1(\R_+))$ for all $T>0$.

Moreover, one has
$$
m(t,s)=\frac{\mu(t,s)}{2\pi}\iint_{\R^{2}\times\Sp}f^{in}(x,v)dxdv
$$
a.e. in $(t,s)\in\R_+\times\R_+$.
\end{Prop}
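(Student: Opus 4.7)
The plan is to split the proof into two independent parts: (a) establish existence and uniqueness of the mild solution $\mu$ of the renewal PDE by reducing it to a Volterra integral equation on the boundary trace $\mu(t,0)$; then (b) verify that $2\pi m$ satisfies the same initial/boundary value problem up to a scalar factor, and conclude by linearity and uniqueness.

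For part (a), I would use the method of characteristics. A general characteristic has the form $\tau\mapsto(\tau_{0}+\tau,s_{0}+\tau)$, along which the argument $\mi=\min(\tau_{0}+\tau,s_{0}+\tau)=\min(\tau_{0},s_{0})+\tau$ of $\dot p/p$ depends only on $\tau$, so the integration of the coefficient $B$ becomes elementary. The solution splits in two regions. In $\{s>t\}$, the characteristic emanates from the initial datum $\mu(0,s-t)=\s e^{-\s(s-t)}$, and integrating $-B$ along it yields
$$
\mu(t,s)=\s e^{-\s s}\frac{p(t)}{p(0)}.
$$
In $\{s<t\}$, the characteristic emanates from the unknown boundary trace at time $t-s$, yielding
$$
\mu(t,s)=\mu(t-s,0)\,e^{-\s s}\frac{p(s)}{p(0)}.
$$
Substituting into the boundary condition $\mu(t,0)=\s\int_{0}^{+\infty}\mu(t,s)\,ds$ produces a linear Volterra equation of the second kind for $\nu(t):=\mu(t,0)$,
$$
\nu(t)=\s e^{-\s t}\frac{p(t)}{p(0)}+\frac{\s}{p(0)}\int_{0}^{t}\nu(t-s)e^{-\s s}p(s)\,ds,
$$
with a kernel that is bounded (since $p$ is continuous, positive and bounded on $[0,+\infty)$ by (\ref{BZtwo})--(\ref{BGWBound})). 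Picard iteration then produces a unique continuous $\nu$ on $\R_{+}$. The two explicit formulas reconstruct $\mu$, and the required $L^{\infty}([0,T];L^{1}(\R_{+}))$ bound follows by direct integration.

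For part (b), I integrate the equation $(\Sigma)$ in $(x,v)\in\R^{2}\times\Sp$. The transport term $\iint v\cdot\nabla_{x}F\,dxdv$ vanishes thanks to the integration over $\R^{2}$, since finite propagation speed together with the $L^{1}\cap L^{\infty}$ bound on $f^{in}$ in (\ref{condinitiale}) ensures that $F(t,s,\cdot,\cdot)$ decays in $x$. The reaction term yields $2\pi m\cdot(-\s+\dot p/p(\mi))=-2\pi B(t,s)m$. For the renewal boundary condition at $s=0$, the key identity is
$$
\int_{\Sp}K\phi(v)\,dv=\int_{\Sp}\phi(v)\,dv,
$$
obtained by Fubini from the symmetry and normalization hypotheses (\ref{kdef}); it gives
$$
2\pi m(t,0)=\s\int_{0}^{+\infty}\iint_{\R^{2}\times\Sp}KF\,dxdv\,ds=\s\int_{0}^{+\infty}2\pi m(t,s)\,ds.
$$
The initial datum for $m$ is $m(0,s)=\frac{M}{2\pi}\s e^{-\s s}$ with $M:=\iint f^{in}\,dxdv$. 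Hence $(2\pi/M)m$ solves exactly the same IVP as $\mu$, and the uniqueness from part (a) gives $m(t,s)=\frac{M}{2\pi}\mu(t,s)$.

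The main obstacle is the unusual structure of the coefficient $B(t,s)=\s-\dot p/p(\mi)$: its dependence on both variables through $\mi$ prevents any naive separation and is what gives the problem its renewal character. The crucial observation is that $\mi$ becomes an affine function of the parameter along every characteristic, which is precisely what reduces the problem to a standard Volterra equation with an explicit kernel built from $p$. A secondary, more bookkeeping-type point is to check that the notion of mild solution is compatible between $(\Sigma)$ and the reduced PDE, so that the space-velocity integration of $(\Sigma)$ genuinely produces a mild solution of the renewal equation; this should be routine once $F$ is itself constructed by the method of characteristics, as presumably done in the proof of Theorem~\ref{premiertheoreme}.
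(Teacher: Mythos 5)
Your plan is sound and the computations along characteristics, the resulting Volterra equation for $\nu(t)=\mu(t,0)$, and the use of the symmetry/normalization of $k$ are all exactly right (note $p(0)=1$, so your kernel is precisely the paper's $\kappa(s)=\s e^{-\s s}p(s)$). Where you diverge from the paper, and where the genuine issues are, is in part (b).

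In part (a), the paper does not isolate the boundary trace $\nu$; it writes the two-variable Duhamel formula
$\mu(t,s)=\ind_{t<s}\s e^{-\s s}p(t)+\ind_{s<t}\s e^{-\s s}p(s)\int_0^\infty\mu(t-s,\tau)\,d\tau$
directly, defines $\mathcal{R}\mu(t,s)=\ind_{s<t}\s e^{-\s s}p(s)\int_0^\infty\mu(t-s,\tau)\,d\tau$, and gets existence and uniqueness in $L^\infty([0,T];L^1(\R_+))$ from the estimate $\|\mathcal{R}^n\phi(t,\cdot)\|_{L^1}\le\tfrac{(\s t)^n}{n!}\|\phi\|$. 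Your reduction to a scalar Volterra equation for $\nu$ is an equivalent and slightly more economical route, but you should be explicit that uniqueness of $\nu$ implies uniqueness of $\mu$ within the class $L^\infty([0,T];L^1(\R_+))$ — i.e. that any mild solution in that class is necessarily given by the reconstruction formulas you wrote, so that its trace solves your Volterra equation.

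In part (b) your route differs more substantively and has a gap. The paper does \emph{not} integrate the differential form of $(\Sigma)$ over $(x,v)$; it integrates the Duhamel (integral-equation) form of Proposition~\ref{eqF} over $(x,v)$ by Fubini, using the translation invariance of Lebesgue measure and $\int_{\Sp}K\phi\,dv=\int_{\Sp}\phi\,dv$. That avoids altogether the need to argue that $\iint v\cdot\nabla_xF\,dxdv=0$. Your "finite propagation speed plus $L^1\cap L^\infty$ bound on $f^{in}$" argument for killing the transport term is not a proof: a priori $F$ is only known (from Proposition~\ref{eqF}) to satisfy $\int_0^\infty|F|\,ds\in L^\infty_{t,x,v}$, and you have not shown $F(t,s,\cdot,\cdot)\in L^1(\R^2\times\Sp)$, which is needed both to define $m$ and to justify the integration by parts in $x$. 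The paper addresses this explicitly: it passes through the weak limit of Theorem~\ref{premiertheoreme} and Fatou's lemma, together with the uniform $L^1$ bound on $f_\e$, to get $m\in L^\infty(\R_+;L^1(\R_+))$. You need some version of this step (or, alternatively, an $L^1$ bound propagated through the Neumann series $F=\sum_n\mathcal{Q}^nF_2$) before the rest of your argument for part (b) can close. Once that integrability is in hand, integrating the Duhamel formula rather than the PDE would be cleaner and matches the paper's technique.
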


Renewal equations are frequently met in many different contexts. For instance they are 
used as a mathematical model in biology to study the dynamics of structured populations. 
The interested reader can consult \cite{Iannelli} or \cite{Thieme} for more information on
this subject. 

Consider next the quantity:
\begin{equation}\label{Mdef}
M(t):=\tfrac{1}{2\pi}\int_{0}^{+\infty}\iint_{\R^{2}\times\Sp}F(t,s,x,v)dxdvds
	=\int_{0}^{+\infty}m(t,s)ds\,.
\end{equation}
As explained in the theorem below, $M(t)$ is the total mass at time $t$ of the particle
system in the limit as $\e\to 0^+$; besides, the asymptotic behavior of $M$ as $t\to+\infty$
is a consequence of the renewal PDE satisfied by the function $(t,s)\mapsto m(t,s)$. 

\begin{main}
\label{secondtheorem}
Under the same assumptions as in theorem $\ref{premiertheoreme}$,
\begin{enumerate}
\item the total mass
$$
\tfrac{1}{2\pi}\iint_{Z_\e\times\Sp}f_{\e}(t,x,v)dxdv\rightarrow M(t)
$$
in $L^1_{loc}(\R_+)$ as $\e\to 0^+$, and a.e. in $t\ge 0$ after extracting a subsequence of
$\e\to 0^+$;
\item the limiting total mass is given by the representation formula
$$
M(t)=\tfrac{1}{2\pi\s}\iint_{\R^{2}\times\Sp}f^{in}(x,v)dxdv\sum_{n\geq1}\kappa^{*n}(t),\ t>0
$$
with 
$$
\kappa(t):=\s e^{-\s t}p(t)\ind_{t\geq0}\,,
	\quad\kappa^{*n}:=\underbrace{\kappa*\cdots*\kappa}_{\hbox{$n$ factors}}
$$ 
and $*$ denoting as usual the convolution product on the real line;
\item for each $\s>0$, there exists $\xis\in (-\s,0)$ such that
$$ 
M(t)\sim C_{\s}e^{\xis t}\ \mbox{as}\ t\rightarrow+\infty
$$ 
with 
$$
C_{\s}:=\tfrac{1}{2\pi\s}\frac{\displaystyle\iint_{\R^{2}\times\Sp} f^{in}(x,v)dxdv}
	{\displaystyle\int_{0}^{\infty} tp(t)e^{-(\s+\xis)t}dt}\,;
$$ 
\item finally, the exponential mass loss rate $\xis$ satisfies
$$
\xis\sim-\s\ \mbox{as}\ \s\rightarrow0^{+},\ \mbox{and}\ 
	\xis\rightarrow -2\ \mbox{as}\ \s\rightarrow+\infty\,.
$$
\end{enumerate}
\end{main}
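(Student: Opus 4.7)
The plan is to handle the four claims in sequence, exploiting the renewal structure inherited from Proposition \ref{renouv}.

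For (1), I would test the weak-$*$ convergence of Theorem \ref{premiertheoreme} against $\ind_{[0,T]}(t)$. To integrate in $x$ over the whole plane, one uses a uniform tightness estimate coming from the unit propagation speed and the $L^{1}$ integrability of $f^{in}$: for any $\eta>0$, mass outside a ball $B_{R+T}$ is bounded by $\iint_{B_{R}^{c}\times\Sp}f^{in}dxdv$ uniformly in $\e$ and $t\in[0,T]$. This yields strong $L^{1}_{loc}(\R_{+})$ convergence of the total mass to $M(t)$, and an a.e. convergent subsequence is extracted in the usual way.

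For (2), set $N(t):=\int_{0}^{+\infty}\mu(t,s)ds$ so that $M(t)=\frac{1}{2\pi}N(t)\iint f^{in}dxdv$ by Proposition \ref{renouv}. Solve the renewal PDE by characteristics: along either characteristic (starting from $s=s_{0}>0$ at $t=0$, or from $s=0$ at $t=t_{0}>0$), one has $t\wedge s=r$ where $r$ is the curve parameter, hence
$$
\mu(t,s)=\s e^{-\s s}p(t)\ \hbox{ if }s\ge t,\qquad \mu(t,s)=\mu(t-s,0)e^{-\s s}p(s)\ \hbox{ if }s<t.
$$
Integrating in $s$, splitting at $s=t$, and using the boundary condition $\mu(t,0)=\s N(t)$ give the Volterra equation $\s N=\kappa+(\s N)*\kappa$, whose Neumann series $\s N=\sum_{n\ge1}\kappa^{*n}$ converges because $\|\kappa\|_{L^{1}}<1$, proving (2).

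For (3), rescale $\tilde H(t):=e^{-\xis t}(\s N)(t)$ and $\tilde\kappa(t):=e^{-\xis t}\kappa(t)$, giving $\tilde H=\tilde\kappa+\tilde H*\tilde\kappa$, and choose $\xis$ so that $\tilde\kappa$ is a probability density, i.e.\ $\s\int_{0}^{+\infty}e^{-(\s+\xis)t}p(t)dt=1$. The left-hand side is continuous and strictly decreasing in $\xis\in(-\s,\infty)$, equal to $\s\int_{0}^{+\infty}e^{-\s t}p(t)dt<1$ at $\xis=0$ (strict, since $p(t)<1$ for $t>0$, as follows from the Boca--Zaharescu representation (\ref{BZtwo})) and blowing up as $\xis\to-\s^{+}$ (from $p(t)\ge C/t$, see (\ref{BGWBound})); the intermediate value theorem thus produces a unique $\xis\in(-\s,0)$. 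Since $\tilde\kappa$ is continuous, bounded, $L^{1}$, non-lattice, and has finite first moment $\mu_{\tilde\kappa}=\s\int_{0}^{+\infty}tp(t)e^{-(\s+\xis)t}dt$, the key renewal theorem \cite{Feller} gives $\tilde H(t)\to1/\mu_{\tilde\kappa}$, whence the equivalent for $M(t)$.

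For (4), the characteristic equation $\s\hat p(\s+\xis)=1$ (with $\hat p$ the Laplace transform of $p$) determines $\xis$ implicitly. Setting $\lambda:=\s+\xis$, as $\s\to0^{+}$ the lower bound $\hat p(\lambda)\gtrsim|\ln\lambda|$ coming from (\ref{BGWBound}) forces $\lambda=O(e^{-c/\s})$, so $\lambda/\s\to0$ and $\xis\sim-\s$. As $\s\to+\infty$, the Watson-type expansion
$$
\hat p(\lambda)=\frac{1}{\lambda}+\frac{\dot p(0)}{\lambda^{2}}+O(\lambda^{-3})
$$
(valid because $\Upsilon$ is bounded near $0$ by (\ref{BZone}), so that $p\in C^{2}$ at $0$ with $p(0)=1$ and $\dot p(0)=-\int_{0}^{+\infty}\Upsilon(\tau)d\tau$) yields $\lambda=\s+\dot p(0)+o(1)$, hence $\xis\to\dot p(0)$; direct evaluation of the definite integral from (\ref{BZone}), which is the main technical obstacle because of the logarithmic singularities at $\tau=\tfrac12$ and $\tau=1$, produces the value $\int_{0}^{+\infty}\Upsilon(\tau)d\tau=2$, so $\xis\to-2$. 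The remaining machinery (renewal theorem and method of characteristics) is classical; the hardest step is this explicit evaluation controlling the logarithmic tails.
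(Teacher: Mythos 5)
Your overall plan mirrors the paper's: reduce to the renewal integral equation $\s N=\kappa+\kappa*(\s N)$ via Proposition \ref{renouv}, sum the Neumann series using $\|\kappa\|_{L^1(\R_+)}<1$, exponentially tilt and invoke the Key Renewal Theorem from \cite{Feller}, and analyze the characteristic equation $\s\,\hat p(\s+\xis)=1$ in the two asymptotic regimes. Parts (2) and (3) are essentially the paper's argument; parts (4) take a slightly different but equally valid route (the paper gets the $\s\to 0^+$ limit by the substitution $z=\ls t$ and dominated convergence rather than your log estimate, and the $\s\to+\infty$ limit by writing $\xis=\int e^{-\ls t}\dot p\,dt/\int e^{-\ls t}p\,dt$, rescaling, and using convexity of $p$ rather than a Watson expansion).

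There is, however, a genuine gap in your proof of (1). Pairing the weak-$*$ convergence of Theorem \ref{premiertheoreme} against $\ind_{[0,T]}(t)\ind_{|x|\le R}(x)$ plus your (correct) tightness bound based on unit propagation speed only shows that $\iint_{Z_\e\times\Sp}f_\e(t,x,v)\,dxdv\to M(t)$ \emph{weakly} in $L^1_{loc}(\R_+)$ (equivalently, weak-$*$ in $L^\infty_{loc}(\R_+)$). The statement claims \emph{strong} $L^1_{loc}(\R_+)$ convergence together with a.e.\ convergence along a subsequence, and weak convergence alone does not yield either. The paper closes this gap with the velocity averaging lemma (Lemma \ref{averging}): the strong $L^1_{loc}$ compactness of $\int_{\Sp}\{f_\e\}\,dv$ is what upgrades the convergence of the truncated mass $\int_{|x|\le R}\int_{\Sp}\{f_\e\}\,dv\,dx$ to strong $L^1_{loc}(\R_+)$ convergence (the term $II_{R,\e}$ in Lemma \ref{Mlemma}). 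You should either invoke Lemma \ref{averging} at this point, or supply another compactness-in-$t$ mechanism (for instance the monotonicity in $t$ of the total mass combined with a Helly-type argument), but some such step is indispensable.

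Two smaller points. In (3), the Key Renewal Theorem requires the forcing term $\kappa_{-\xis}=e^{-\xis t}\kappa(t)$ to be directly Riemann integrable; being continuous, bounded and $L^1$ is not sufficient in general. The paper uses that $\kappa_{-\xis}=\s e^{-(\s+\xis)t}p(t)$ is \emph{nonincreasing} (since $\s+\xis>0$ and $p$ is nonincreasing), which together with integrability gives direct Riemann integrability; you should state this. In (4), for $\s\to 0^+$ the inequality you cite is in the wrong direction: a lower bound $\hat p(\lambda)\gtrsim|\ln\lambda|$ combined with $\hat p(\lambda)=1/\s$ yields a \emph{lower} bound on $\lambda$; to force $\lambda=O(e^{-c/\s})$, and hence $\lambda=o(\s)$, you need the \emph{upper} bound on $p$ from (\ref{BGWBound}), which gives $\hat p(\lambda)\lesssim 1+|\ln\lambda|$. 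The conclusion is still correct, but the justification as written does not prove it.
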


Statement (1) above means that $M$ is the limiting mass of the particle system at time $t$ as 
$\e\to 0^{+}$. Statement (3) gives a precise asymptotic equivalent of $M(t)$ as $t\to+\infty$.

\smallskip
As recalled in the previous section, if $\s=0$ in the linear Boltzmann equation ($\Xi_\e$), the
total mass of the particle system in the vanishing $\e$ limit is asymptotically equivalent to
$$
\frac{\tfrac1{2\pi}\displaystyle\iint_{\R^2\times\Sp}f^{in}(x,v)dxdv}{\pi^2 t}
$$
as $t\to+\infty$. The reason for this slow, algebraic decay is the existence of channels ---
infinite open strips included in the spatial domain $Z_\e$, i.e. avoiding all the holes. Particles
located in one such channel and moving in a direction close to the channel's direction will
not fall into a hole before exiting the channel, and this can take an arbitrarily long time as the
particles' direction approaches that of the channel. This construction based on channels 
leads to a sufficiently large fraction of the single-particle phase space and accounts for the
algebraic lower bound in (\ref{BGWBound}). The asymptotic equivalent mentioned above in 
the collisionless case $\s=0$ is a consequence of a more refined analysis based on continued
fractions given in \cite{Golse-Caglioti}.

When $\s>0$, particles whose distribution function solves the linear Boltzmann equation in
($\Xi_\e$) travel on trajectories whose direction is discontinuous in time --- more specifically,
time discontinuities are distributed under an exponential law of parameter $\s$. Obviously,
this circumstance destroys the channel structure that is responsible of the algebraic decay
of the total mass of the particle system in the collisionless case, so that one expects that the
total mass decay is faster than algebraic as $t\to+\infty$. That this decay is indeed exponential 
whenever $\s>0$ is by no means obvious: see the argument in \cite{EBMajorMass}, leading 
to an upper bound for the total mass. Statement (3) above leads to an asymptotic equivalent of 
the total mass, thereby refining the conclusions of \cite{EBMajorMass}.

\smallskip
In section 4, we give the proof of theorem $\ref{premiertheoreme}$; the evolution of the total
mass in the vanishing $\e$ limit (governing equation and asymptotic behavior as $t\to+\infty$) 
is discussed in section 5.

\section{The homogenized kinetic equation}

Our argument for the proof of Theorem \ref{premiertheoreme} is split into several steps.

\subsection{A new formulation of the transport equation}

Perhaps the most surprising feature in Theorem \ref{premiertheoreme} is the introduction of 
the extended phase space involving the additional variable $s$.

As a matter of fact, this additional variable $s$ can be used already at the level of the original
linear Boltzmann equation --- i.e. in the formulation of the problem $(\Xi_{\e})$.

Let us indeed return to the initial boundary value problem $(\Xi_{\e})$ for the linear Boltzmann 
equation.

As recalled above, the last two authors have obtained the homogenized equation corresponding 
to $(\Xi_{\e})$ in the noncollisional case $(\s=0)$ by explicitly computing the solution of the linear
Boltzmann equation for each $0<\e\ll 1$. In the collisionnal case $(\s>0)$, as recalled above, 
there is no such explicit formula giving the solution of the linear Boltzmann equation --- except the 
semi-explicit formula involving the transport process defined in \cite{PapanicoBAMS75}.

However, not all the information in that semi-explicit formula is needed for the proof of Theorem
\ref{premiertheoreme}. The additional variable $s$ is precisely the exact amount of information 
contained in that semi-explicit formula needed in the description of the homogenized process in
the limit as $\e\to 0^+$.

Consider therefore the initial boundary value problem
$$
(\Sigma_{\e})\left\{
\begin{array}{ll}
\pl_{t}F_{\e}+v\cdot\nabla_{x}F_{\e}+\pl_{s}F_{\e}+\s F_{\e}=0,\ &t,s>0,(x,v)\in Z_{\e}\times\Sp,
\\	\\
F_{\e}(t,s,x,v)=0,\ \mbox{if\ }v\cdot n_{x}>0,\ &t,s>0, (x,v) \in (\pl Z_{\e}\times\Sp),
\\	\\
F_{\e}(t,0,x,v)=\s\displaystyle\int_{0}^{\infty}KF_{\e}(t,s,x,v)ds,\ & t>0, (x,v)\in Z_{\e}\times\Sp,
\\	\\
F_{\e}(0,s,x,v)=\s e^{-\s s}f^{in}(x,v),\ &s>0, (x,v)\in Z_{\e}\times\Sp,
\end{array}
\right.
$$
with unknown $F_{\e}:=F_{\e}(t,s,x,v)$.

The relation between these two initial boundary value problems, $(\Xi_{\e})$ and $(\Sigma_{\e})$, 
is explained by the following proposition.

\begin{Prop}\label{Xif}
Assume that $f^{in}$ satisfies the assumption (\ref{condinitiale}). Then

\noindent
a) for each $\e>0$, the problem $(\Sigma_{\e})$ has a unique mild solution such
that
$$
(t,x,v)\mapsto\int_0^{+\infty}|F_\e(t,s,x,v)|ds\hbox{ belongs to }L^\infty([0,T]\times Z_\e\times\Sp)
$$
for each $T>0$;

\noindent
b) moreover
$$
0\le F_\e(t,s,x,v)\le\|f^{in}\|_{L^\infty(\R^2\times\Sp)}\s e^{-\s s}
$$
a.e. in $t,s\ge 0$, $x\in Z_\e$ and $v\in\Sp$, and
$$ 
\int_{0}^{+\infty}F_{\e}(t,s,x,v)ds=f_{\e}(t,x,v),
$$ 
for a.e. $t\geq0, x\in Z_{\e}, v\in\Sp$, where $f_\e$ is the solution of $(\Xi_{\e})$.
\end{Prop}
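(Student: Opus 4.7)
The plan is to reduce $(\Sigma_\e)$ to a linear Volterra-type equation for the boundary trace $G_\e(t,x,v):=F_\e(t,0,x,v)$ by solving the transport part along characteristics, and then to recover the solution of $(\Xi_\e)$ as $\tilde f_\e:=\int_0^{+\infty}F_\e\,ds$. The characteristics of $\pl_tF_\e+v\cdot\nabla_xF_\e+\pl_sF_\e+\s F_\e=0$ are the curves $\tau\mapsto(t-\tau,s-\tau,x-\tau v,v)$, along which $e^{\s\tau}F_\e$ is conserved. Tracing backwards until the first of three events occurs --- hitting $\pl Z_\e$ at backward time $\tau_\e(x,v)$ (where $F_\e$ vanishes by absorption), reaching $t=0$ (where the initial datum applies), or reaching $s=0$ (where the integral boundary condition applies) --- one obtains the explicit representation
\begin{equation*}
F_\e(t,s,x,v) = \ind_{\{s<t,\,s\le\tau_\e(x,v)\}}G_\e(t-s,x-sv,v)e^{-\s s}
+ \ind_{\{t\le s,\,t\le\tau_\e(x,v)\}}\s e^{-\s s}f^{in}(x-tv,v).
\end{equation*}
Substituting this into $G_\e=\s\int_0^{+\infty}KF_\e\,ds$ produces a closed linear equation for $G_\e$ on $\R_+\times Z_\e\times\Sp$ in which the value of $G_\e$ at time $t$ depends only on its values at strictly earlier times and on $f^{in}$.

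I would then solve this equation by a contraction argument in $L^\infty([0,T]\times Z_\e\times\Sp)$ for arbitrary $T>0$. The decisive ingredient is the normalization $\frac1{2\pi}\int_\Sp k(v,w)dw=1$ a.e.\ from (\ref{kdef}), which ensures that $K$ maps nonnegative bounded functions to nonnegative functions with the same $L^\infty$ bound, pointwise a.e.\ in $v$; this bypasses the fact that $k$ is merely $L^2$. The resulting integral operator acting on the difference $G^1_\e-G^2_\e$ then has norm at most $1-e^{-\s T}<1$ on $L^\infty([0,T]\times Z_\e\times\Sp)$, yielding existence and uniqueness of $G_\e$, and hence of $F_\e$. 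Starting the iteration at $F_\e^{(0)}\equiv0$ produces a monotone sequence of nonnegative functions; running the same scheme on the envelope $\|f^{in}\|_{L^\infty(\R^2\times\Sp)}\s e^{-\s s}$ and using once more that $K$ preserves it shows this envelope is propagated at every step. This proves claim (b), and integrating in $s$ controls $\int_0^{+\infty}|F_\e|\,ds$ in $L^\infty([0,T]\times Z_\e\times\Sp)$ as required in (a).

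To complete (b), I would integrate the PDE of $(\Sigma_\e)$ in $s$ over $(0,+\infty)$: the $s=+\infty$ boundary term vanishes by the exponential bound just obtained, the $s=0$ boundary term is exactly $F_\e(t,0,x,v)=\s K\tilde f_\e(t,x,v)$ by the boundary condition in $(\Sigma_\e)$, the initial condition reduces to $\tilde f_\e(0,\cdot,\cdot)=\int_0^{+\infty}\s e^{-\s s}f^{in}\,ds=f^{in}$, and the absorbing condition on $\pl Z_\e$ is inherited pointwise in $s$. Thus $\tilde f_\e$ is a mild solution of $(\Xi_\e)$ in the class (\ref{XiBounds}), and uniqueness for $(\Xi_\e)$ identifies it with $f_\e$. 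I expect the main technical obstacle to be making the $L^\infty$ fixed-point argument rigorous while $k$ is only $L^2$: the sole available control on $K$ is the a.e.\ pointwise $L^1$-in-$w$ normalization of $k$, so the contraction must be set up in a positive cone dominated by $\|f^{in}\|_{L^\infty(\R^2\times\Sp)}\s e^{-\s s}$ rather than in a naive ball of $L^\infty$.
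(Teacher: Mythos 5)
Your proposal is correct and arrives at the same conclusions, but by a genuinely different route from the paper's proof, and it is worth spelling out the differences.

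The paper does not pass through the boundary trace $G_\e=F_\e(\cdot,0,\cdot,\cdot)$. Instead it works directly with $F_\e$ in the Banach space $\mathcal{X}_T$ of measurable functions $G$ for which $(t,x,v)\mapsto\int_0^{+\infty}|G(t,s,x,v)|\,ds$ lies in $L^\infty([0,T]\times Z_\e\times\Sp)$, writing the mild formulation as a fixed-point equation $F_\e=F_{2,\e}+\mathcal{T}F_\e$ with $F_{2,\e}$ the free-transport-of-data term and $\mathcal{T}$ the part carrying the $s=0$ boundary condition. Rather than exhibit a one-step contraction, the paper establishes the iterated bound $\|\mathcal{T}^nG\|_{\mathcal{X}_T}\le\frac{(\s T)^n}{n!}\|G\|_{\mathcal{X}_T}$ and sums the Neumann series $F_\e=\sum_{n\ge0}\mathcal{T}^nF_{2,\e}$, with uniqueness following from $\|\mathcal{T}^n\|\to0$. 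Your reduction to a closed Volterra equation for $G_\e$, solved in ordinary $L^\infty([0,T]\times Z_\e\times\Sp)$ by the contraction estimate $\|\mathcal{A}\|\le1-e^{-\s T}<1$, is equivalent; the extra gain you extract from the factor $e^{-\s s}$ under the $s$-integral is exactly why you get a genuine contraction for any $T$, whereas the paper's scheme only uses $|\int_0^t\cdot\,ds|\le t\cdot\sup$ and relies on the factorial to close. Both are sound; the factorial route is marginally more robust if the $e^{-\s s}$ weight were absent, while your route is arguably cleaner.

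For part (b), you also diverge. You derive the pointwise bound $0\le F_\e\le\|f^{in}\|_\infty\s e^{-\s s}$ by propagating the envelope through the monotone iteration, and then obtain the identity $\int_0^{+\infty}F_\e\,ds=f_\e$ by integrating the PDE of $(\Sigma_\e)$ in $s$. The paper proceeds in the opposite order: nonnegativity is read off the Neumann series, then $\int_0^{+\infty}F_\e\,ds$ is computed from the mild (integral) formulation and recognized to be the Duhamel formula for $(\Xi_\e)$, so that $\int_0^{+\infty}F_\e\,ds=f_\e$ by uniqueness; finally the upper bound on $F_\e$ is deduced by inserting the maximum principle bound $f_\e\le\|f^{in}\|_\infty$ back into the representation of $F_\e$. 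Your envelope argument is self-contained and does not invoke the maximum principle for $(\Xi_\e)$; the paper's order of operations uses less in the way of monotone iteration but relies on classical $L^\infty$ bounds for $(\Xi_\e)$ already stated in (\ref{XiBounds}). One small caution on your third paragraph: integrating the differential form of $(\Sigma_\e)$ in $s$ is only a heuristic at the mild-solution level of regularity; the clean way to justify the identification $\int_0^{+\infty}F_\e\,ds=f_\e$ is, as the paper does, to integrate the fixed-point integral equation in $s$ and compare it with the Duhamel formula for $(\Xi_\e)$ directly. Your final remark about $k$ being merely $L^2$ is correctly handled: the pointwise $L^1$-in-$w$ normalization of $k$ is indeed all that the $L^\infty$ estimates require, and the paper uses it in exactly the same way.
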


\begin{proof}
Applying the method of characteristics, we see that, should a mild solution $F_\e$  of the problem 
$(\Sigma_{\e})$ exist, it must satisfy
\begin{equation}
\label{FirstF}
F_{\e}(t,s,x,v)= F_{1,\e}(t,s,x,v)+F_{2,\e}(t,s,x,v),
\end{equation}
with
\begin{equation}
\begin{aligned}
F_{1,\e}(t,s,x,v) &= \ind_{s<\e \tau_\e(\frac{x}{\e},v)}\ind_{s<t}e^{-\s s}F_{\e}(t-s,0,x-vs,v)
\\
&= \ind_{s<\e \tau_\e(\frac{x}{\e},v)}\ind_{s< t}\s e^{-\s s}\int_0^{+\infty}KF_{\e}(t-s,\tau,x-sv,v)d\tau
\end{aligned}
\end{equation}
and
\begin{equation}
\label{LastF}
\begin{aligned}
F_{2,\e}(t,s,x,v) &= \ind_{t<\e \tau_\e(\frac{x}{\e},v)}\ind_{t< s}e^{-\s t}F_{\e}(0,s-t,x-vt,v)
\\
&= \ind_{t<\e \tau_\e(\frac{x}{\e},v)}\ind_{t< s}\s e^{-\s s}f^{in}(x-tv,v)
\end{aligned}
\end{equation}
a.e. in $(t,s,x,v)\in\R_+\times\R_+\times\R^2\times\Sp$. 

First, define $\mathcal{X}_T$ to be, for each $T>0$, the set of measurable functions $G$ defined on 
$\R_+\times\R_+\times Z_\e\times\Sp$ such that
$$
(t,x,v)\mapsto\int_0^{+\infty}|G(t,s,x,v)|ds\hbox{ belongs to }L^\infty([0,T]\times Z_\e\times\Sp)\,,
$$
which is a Banach space for the norm
$$
\|G\|_{\mathcal{X}_T}=\left\|\int_0^{+\infty}|G(\cdot,s,\cdot,\cdot)|ds\right\|_{L^\infty([0,T]\times Z_\e\times\Sp)}\,.
$$
Next, for each $G\in\mathcal{X}_T$, we define
$$
\mathcal{T}G(t,s,x,v):=\ind_{s<\e \tau_\e(\frac{x}{\e},v)}\ind_{s< t}\s e^{-\s s}\int_0^{+\infty}KG(t-s,\tau,x-sv,v)d\tau\,.
$$
Obviously
$$
\begin{aligned}
{}&\left\|\int_0^{+\infty}|\mathcal{T}^nG(t,s,\cdot,\cdot)|ds\right\|_{L^\infty(Z_\e\times\Sp)}
\\
&\qquad\le\s\int_0^t\left\|\int_0^{+\infty}|\mathcal{T}^{n-1}G(t_1,\tau,\cdot,\cdot)|d\tau\right\|_{L^\infty(Z_\e\times\Sp)}dt_1
\\
&\qquad\le\s^n\int_0^t\ldots\int_0^{t_{n-1}}\left\|\int_0^{+\infty}|G(t_n,s,\cdot,\cdot)|ds\right\|_{L^\infty(Z_\e\times\Sp)}dt_n\ldots dt_1\,,
\end{aligned}
$$
so that
$$
\|\mathcal{T}^nG\|_{\mathcal{X}_T}\le\frac{(\s T)^n}{n!}\|\mathcal{T}^nG\|_{\mathcal{X}_T}\,.
$$

Now $F_{1,\e}=\mathcal{T}F_\e$, so that (\ref{FirstF}) can be recast as 
$$
F_\e=F_{2,\e}+\mathcal{T}F_\e\,.
$$
This integral equation has a solution $F_\e\in\mathcal{X}_T$ for each $T>0$, given by the series
$$
F_\e=\sum_{n\ge 0}\mathcal{T}^nF_{2,\e}
$$
which is normally convergent in the Banach space $\mathcal{X}_T$ since
$$
\sum_{n\ge 0}\|\mathcal{T}^nF_{2,\e}\|_{\mathcal{X}_T}\le\sum_{n\ge 0}\frac{(\s T)^n}{n!}\|F_{2,\e}\|_{\mathcal{X}_T}<+\infty\,.
$$
Assuming that the integral equation above has another solution $F'_\e\in\mathcal{X}_T$ would
imply that
$$
F_\e-F'_\e=\mathcal{T}(F_\e-F'_\e)=\ldots=\mathcal{T}^n(F_\e-F'_\e)\,,
$$
so that
$$
\|F_\e-F'_\e\|_{\mathcal{X}_T}=\|\mathcal{T}^n(F_\e-F'_\e)\|_{\mathcal{X}_T}
	\le\frac{(\s T)^n}{n!}\|F_\e-F'_\e\|_{\mathcal{X}_T}
\to 0
$$
as $n\to+\infty$: hence $F'_\e=F_\e$. Thus we have proved statement a).

As for statement b), observe that $\mathcal{T}G\ge 0$ a.e. on $\R_+\times\R_+\times Z_\e\times\Sp$ 
if $G\ge 0$ a.e. on $\R_+\times\R_+\times Z_\e\times\Sp$. Hence, if 
$f^{in}\in L^\infty(\R^2\times\Sp)$ satisfies $f^{in}\ge 0$ a.e. on $\R^2\times\Sp$, one has 
$F_{2,\e}\ge 0$ a.e. on $\R_+\times\R_+\times Z_\e\times\Sp$, so that $\mathcal{T}^nF_{2,\e}\ge 0$
a.e. on $\R_+\times\R_+\times Z_\e\times\Sp$ and the series defining $F_\e$ is a.e. nonnegative on 
$\R_+\times\R_+\times Z_\e\times\Sp$.

Next, integrating both sides of (\ref{FirstF}) with respect to $s$, and setting 
$$
g_\e(t,x,v):=\int_0^{+\infty}F_\e(t,s,x,v)ds\,,
$$
we arrive at 
$$
\begin{aligned}
g_\e(t,x,v)
&=
\int_0^{+\infty}F_{2,\e}(t,s,x,v)ds+\int_0^{+\infty}F_{1,\e}(t,s,x,v)ds
\\
&=
\ind_{t<\e \tau_\e(\frac{x}{\e},v)}f^{in}(x-tv,v)\int_0^{+\infty}\ind_{t<s}\s e^{-\s s}ds
\\
&+
\int_0^{+\infty}\ind_{s<\e \tau_\e(\frac{x}{\e},v)}\ind_{s< t}\s e^{-\s s}
	\left(\int_0^{+\infty}KF_{\e}(t-s,\tau,x-sv,v)d\tau\right)ds
\\
&=\ind_{t<\e \tau_\e(\frac{x}{\e},v)}f^{in}(x-tv,v)e^{-\s t}
\\
&+
\int_0^te^{-\s s}\ind_{s<\e \tau_\e(\frac{x}{\e},v)}\s Kg_\e(t-s,x-sv,v)ds
\end{aligned}
$$
in which we recognize the Duhamel formula giving the unique mild solution $f_\e$ of $(\Xi_{\e})$.
Hence
$$
f_\e(t,x,v)=\int_0^{+\infty}F_\e(t,s,x,v)ds\hbox{ a.e. in }(t,x,v)\in\R_+\times Z_\e\times\Sp\,.
$$

Finally, since $(\Xi_{\e})$ satisfies the maximum principle, one has
$$
f_\e(t,x,v)\le\|f^{in}\|_{L^\infty(\R^2\times\Sp)}\hbox{ a.e. in }(t,x,v)\in\R_+\times Z_\e\times\Sp\,.
$$
Going back to (\ref{FirstF}), we recast it in the form
$$
\begin{aligned}
F_\e(t,s,x,v)&=\ind_{s<\e \tau_\e(\frac{x}{\e},v)}\ind_{s< t}\s e^{-\s s}Kf_{\e}(t-s,x-sv,v)
\\
&+
\ind_{t<\e \tau_\e(\frac{x}{\e},v)}\ind_{t< s}\s e^{-\s s}f^{in}(x-tv,v)
\\
&\le\ind_{s<\e \tau_\e(\frac{x}{\e},v)}\ind_{s< t}\s e^{-\s s}\|f^{in}\|_{L^\infty(\R^2\times\Sp)}
\\
&+
\ind_{t<\e \tau_\e(\frac{x}{\e},v)}\ind_{t< s}\s e^{-\s s}\|f^{in}\|_{L^\infty(\R^2\times\Sp)}
\\
&\le
\s e^{-\s s}\|f^{in}\|_{L^\infty(\R^2\times\Sp)}
\end{aligned}
$$
a.e. in $(t,s,x,v)\in\R_+\times\R_+\times Z_\e\times\Sp$, which concludes the proof.
\end{proof}

Observe that if
$$
F_\e(0,s,x,v)=\s e^{-\s s}f^{in}(x,v)
$$ 
is replaced with
$$
F_\e(0,s,x,v)=\Pi(s)f^{in}(x,v)
$$ 
where $\Pi$ is any probability density on $\R_{+}$ vanishing at $\infty$, the conclusion of the 
lemma above remains valid. In other words, the dependence of the solution $F_\e$ of the 
problem ($\Sigma$) upon the choice of the initial probability density $\Pi$ disappears after
integration in $s$, so that the particle distribution function $f_\e$ is indeed independent of
the choice of $\Pi$. 

The extra variable $s$ in the extended phase space has the following interpretation. Recall that 
the solution $f_{\e}$ of the linear Boltzmann equation can be expressed in terms of the transport
process (see \cite{PapanicoBAMS75}), a stochastic process involving a jump process in the 
$v$ variable, perturbed by a drift in the $x$ variable. The variable $s$ is the ``age'' of the current
velocity $v$ in that process, i.e. the time since the last jump in the $v$ variable. 

The choice $\Pi(s)=\s e^{-\s s}$ corresponds with the situation where the gas molecules have 
been evolving under the linear Boltzmann equation for $t<0$ and the holes are suddenly opened 
at $t=0$.

Before giving the proof of Theorem $\ref{premiertheoreme}$, we need to establish a few technical 
lemmas.

\subsection{The distribution of free path lengths}

A straightforward consequence of the limit in (\ref{BZLim}) is the following lemma, which accounts
eventually for the coefficient $\dot{p}(t\wedge s)/p(t\wedge s)$ in the limiting equation ($\Sigma$).

\begin{Lemma}\label{Golse-Caglioti}
Let $\tau_{\e}$ be the free path length defined in $(\ref{length})$. Then for each $t>0$
$$
\{\ind_{t<\e\tau_{\e}(\frac{x}{\e},v)}\}\rightharpoonup p(t)
$$ 
in $L^{\infty}(\R^{2}\times\Sp)$ weak-$*$ as $\e\rightarrow0^{+}$.
\end{Lemma}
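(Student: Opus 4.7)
The plan is to read (\ref{BZLim}) as computing the mean of the $\e\Z^2$-periodic function $y\mapsto\{\ind_{t<\e\tau_\e(y,v)}\}$ over a unit cell and then invoke the classical principle that a uniformly bounded periodic function tends weakly-$*$ to its mean when its period shrinks. Write $h_\e(y,v):=\{\ind_{t<\e\tau_\e(y,v)}\}$; since $Z_\e$ is $\e\Z^2$-periodic, so is $h_\e(\cdot,v)$, and consequently $x\mapsto h_\e(x/\e,v)$ is $\e^2\Z^2$-periodic in $x$. Because $0\le h_\e\le 1$, the uniform bound $\|h_\e(\cdot/\e,\cdot)\|_{L^\infty(\R^2\times\Sp)}\le 1$ lets me test only against a dense subfamily of $L^1(\R^2\times\Sp)$, namely products $\phi=\ind_Q\otimes\ind_I$ with $Q\subset\R^2$ a cube of sidelength $L$ and $I\subset\Sp$ an arc.

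For such a $\phi$, I change variables $y=x/\e$:
$$
\int_{Q\times I}h_\e(x/\e,v)\,dx\,dv\;=\;\e^2\int_{(Q/\e)\times I}h_\e(y,v)\,dy\,dv.
$$
I then tile the cube $Q/\e$ (of sidelength $L/\e$) by $\lfloor L/\e\rfloor^2$ integer translates of $[0,1]^2$, leaving a boundary strip of total measure $O(L/\e)$. The $\e\Z^2$-periodicity of $h_\e$ allows me to replace each integer translate $n\in\Z^2$ by the representative of $n\bmod\e\Z^2$ in $[-\e/2,\e/2]^2$; by a symmetric-difference estimate, each such replacement changes the integral by $O(\e)$. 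Summing,
$$
\int_{Q/\e}h_\e(y,v)\,dy\;=\;\frac{|Q|}{\e^2}\int_{[0,1]^2}h_\e(y,v)\,dy\;+\;O(L^2/\e).
$$
Multiplying by $\e^2$, integrating over $v\in I$, and applying (\ref{BZLim}),
$$
\e^2\int_{(Q/\e)\times I}h_\e(y,v)\,dy\,dv\;=\;|Q|\int_{[0,1]^2\times I}h_\e(y,v)\,dy\,dv\;+\;O(\e)\;\longrightarrow\;|Q|\,p(t)\,|I|,
$$
which is the desired product-form limit.

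The main obstacle, such as it is, is bookkeeping with two nested small scales: the microscopic period $\e$ of $h_\e$ in the $y$-variable becomes the even finer period $\e^2$ in the $x$-variable, and one must check that the boundary corrections from tiling $Q/\e$ by unit cubes remain lower order than the main contribution $|Q|/\e^2$. Once the product case is handled, extension from products $\ind_Q\otimes\ind_I$ to arbitrary $\phi\in L^1(\R^2\times\Sp)$ follows from density of simple functions combined with the uniform $L^\infty$ bound on $h_\e(\cdot/\e,\cdot)$.
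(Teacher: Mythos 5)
Your tiling argument takes a genuinely different route from the paper. The paper tests against $\chi(x)\ind_I(v)$ with $\chi\in C^\infty_0(\R^2)$, expands the unit-periodic velocity average $T_\e(y)=\int_I\ind_{\e\tau_\e(y,v)>t}\,dv$ in a Fourier series on $\R^2/\Z^2$, and uses Parseval together with the decay $|\hat\chi(\xi)|\le\|\nabla^2\chi\|_{L^\infty}/|\xi|^2$ to show that only the zero Fourier mode survives, giving $\hat\chi(0)\hat T_\e(0)\to p(t)|I|\int\chi$. You instead tile the blown-up cube $Q/\e$ by unit cells and sum the per-cell contributions supplied by (\ref{BZLim}). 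Both are standard proofs of the same classical fact that rescaled periodic functions converge weakly-$*$ to their cell mean; your route is more elementary and works directly with indicators as test functions, while the Fourier route yields a cleaner $O(\e^2)$ rate for smooth $\chi$.

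One relabeling to correct: reading (\ref{length}) literally, you declare $h_\e(\cdot,v)$ to be $\e\Z^2$-periodic and hence $h_\e(\cdot/\e,v)$ to be $\e^2\Z^2$-periodic. But (\ref{BZLim}) --- and the paper's own proof, where $T_\e$ is said to be ``$1$-periodic'' and is Fourier-expanded on $\R^2/\Z^2$ --- force $\tau_\e(y,v)$ to be the free path in the unit-periodic domain $\{y:\mathrm{dist}(y,\Z^2)>\e\}$ rather than in $Z_\e$ itself. (Under your reading one would have $\e\tau_\e\sim\e\to0$, so the left side of (\ref{BZLim}) would tend to $0$, not to $p(t)|I|$.) Thus $h_\e(\cdot,v)$ is $\Z^2$-periodic and $h_\e(\cdot/\e,v)$ is $\e\Z^2$-periodic. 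With this corrected period your tiling is simpler still: each integer translate of $[0,1]^2$ contributes exactly $\int_{[0,1]^2}h_\e\,dy$, the symmetric-difference step is unnecessary, and the only error is the $O(L/\e)$ boundary strip, which after the factor $\e^2$ is $O(L\e)$. The slip is invited by the paper's own notation and does not affect the structure of your argument, but as written the periodicity you claim is internally inconsistent with the limit you invoke.
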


\noindent
(See the definition before Theorem \ref{premiertheoreme} for the notation $\{\ind_{t<\e\tau_{\e}(\frac{x}{\e},v)}\}$.)

\begin{proof}
Since the linear span of functions $\chi\equiv\chi(x,v)$ of the form
$$
\phi(x,v)=\chi(x)\ind_I(v)\,,\quad\phi\in C^\infty_0(\R^{2})\hbox{ and $I$ an arc of $\Sp$}
$$
is dense in $L^1(\R^{2}\times\Sp)$, and the family $\ind_{\e\tau_{\e}(\frac{x}{\e},v)>t}$ is bounded
in $L^{\infty}(\R^{2}\times\Sp)$, it is enough to prove that
$$
\iint_{Z_\e\times\Sp}\phi(x,v)\ind_{\e\tau_{\e}(\frac{x}{\e},v)>t}dxdv
	\rightarrow p(t)\iint_{\R^{2}\times\Sp}\phi(x,v)dxdv\ \mbox{as\ }\e\rightarrow0\,.
$$ 
Write
\begin{eqnarray*}
\iint_{Z_\e\times\Sp}\phi(x,v)\ind_{\e\tau_{\e}(\frac{x}{\e},v)>t}dxdv
&=& \int_{Z_\e}\chi(x)\left(\int_{I}\ind_{\e\tau_{\e}(\frac{x}{\e},v)>t}dv\right)dx
\\
&=& \int_{Z_\e}\chi(x)T_{\e}\left( \frac{x}{\e}\right) dx
\end{eqnarray*}
with
$$
T_{\e}(y):=\int_{I}\ind_{\e\tau_{\e}(y,v)>t}dv\,.
$$

Obviously $T_\e$ is $1$-periodic in $y_1$ and $y_2$ and satisfies $0\le T_\e\le|I|$. Hence
$$
\ind_{d(y,\mathbbm{Z}^{2})>\e}T_{\e}(y)=\sum_{k\in\Z^{2}}\hat{T}_{\e}(k)e^{2i\pi k\cdot y}
$$
in $L^2(\R^2/\Z^2)$
with 
$$ 
\hat{T}_{\e}(k):=\int_{\max(|z_1|,|z_2|)<1/2\atop|z|>\e}T_{\e}(z)^{-2i\pi k\cdot z}dz
$$
for each $k\in\Z^{2}$. 

Then, by Parseval's identity, 
$$
\begin{aligned}
\int_{Z_\e}\chi(x)T_{\e}\left(\frac{x}{\e}\right)dx 
&= 
\int_{\R^2}\chi(x)\left(\sum_{k\in\Z^{2}}\hat{T}_{\e}(k)e^{2i\pi \frac{k\cdot x}{\e}}\right)dx
\\
&=\hat{\chi}(0)\hat{T}_{\e}(0)
	+\sum_{k\in\Z^{2}\setminus(0,0)}\hat{T}_{\e}(k)\hat{\chi}(-2\pi k/\e)\,,
\end{aligned}
$$
with 
$$ 
\hat{\chi}(\xi):=\int_{\R^{2}}\chi(x)e^{-i\xi\cdot x}dx\,.
$$

Applying again Parseval's identity,
$$
\sum_{k\in\Z^2}|\hat{T}_{\e}(k)|^2=\int_{\max(|y_1|,|y_2|)<1/2\atop|y|>\e}|T_\e(y)|^2dy\le|I|
$$
while 
$$
|\hat{\chi}(\xi)|\le\frac1{|\xi|^2}\|\nabla^2\chi\|_{L^\infty}\,,
$$
so that
$$
|\hat{\chi}(-2\pi k/\e)|\le\frac{\e^2}{4\pi^2|\xi|^2}\|\nabla^2\chi\|_{L^\infty}\,.
$$
Hence, by the Cauchy-Schwarz inequality, 
$$
\left|\sum_{k\in\Z^{2}\setminus(0,0)}\hat{T}_{\e}(k)\hat{\chi}(-2\pi k/\e)\right|^2
\le
\sum_{k\in\Z^{2}\setminus(0,0)}|\hat{T}_{\e}(k)|^2
\sum_{k\in\Z^{2}\setminus(0,0)}\frac{\e^4\|\nabla^2\chi\|^2_{L^\infty}}{16\pi^4|k|^4}
=O(\e^4)
$$
and therefore
$$
\int_{Z_\e}\chi(x)T_{\e}\left(\frac{x}{\e}\right) dx=\hat{\chi}(0)\hat{T}_{\e}(0)+O(\e^2)
$$
as $\e\to 0^+$.

By (\ref{BZLim})
$$
\hat T_\e(0)=\int_{\max(|y_1|,|y_2|)<1/2\atop|y|>\e}T_\e(y)dy\to p(t)|I|\quad\hbox{Êas }\e\to 0^+\,,
$$
so that
$$
\hat{\chi}(0)\hat{T}_{\e}(0)\to p(t)|I|\int_{\R^2}\chi(x)dx=p(t)\iint_{\R^2\times\Sp}\phi(x,v)dxdv
$$
as $\e\to 0^+$, and hence
$$
\int_{Z_\e}\chi(x)T_{\e}\left(\frac{x}{\e}\right) dx=p(t)\iint_{\R^2\times\Sp}\phi(x,v)dxdv+o(1)+O(\e^2)
$$
which entails the announced result.
\end{proof}

\subsection{Extending $f_{\e}$ by $0$ in the holes}

We begin with the equation satisfied by the (extension by $0$ inside the holes of the)
distribution function $\left\lbrace f_{\e}\right\rbrace$.

\begin{Lemma}\label{eqg}
For each $\e>0$, the function $\left\lbrace f_{\e}\right\rbrace$ satisfies
$$ 
(\pl_{t}+v\cdot\nabla_{x})\left\lbrace f_{\e}\right\rbrace
	+\s(\left\lbrace f_{\e}\right\rbrace-K\left\lbrace f_{\e}\right\rbrace)
		=(v\cdot n_x)f_{\e}\big|_{\pl Z_{\e}\times\Sp}\delta_{\pl Z_{\e}}
$$ 
in $\mathcal{D}'(\R_{+}^{*}\times\R^{2}\times\Sp),$ where $\delta_{\pl Z_{\e}}$ is the surface 
measure concentrated on the boundary of $Z_{\e}$, and $n_{x}$ is the unit normal vector at 
$x\in\pl Z_{\e}$ pointing towards the interior of $Z_{\e}$.
\end{Lemma}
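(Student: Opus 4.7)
The identity is distributional, so I test against $\phi \in C_c^\infty((0,+\infty)\times\R^2\times\Sp)$. Since $K$ acts only on the velocity variable, one checks directly that $K\{f_\e\}=\{Kf_\e\}$; writing out the pairing, the left-hand side evaluated at $\phi$ collapses to an integral over $Z_\e$:
$$\mathcal{L}(\phi):=-\iiint_{\R_+\times Z_\e\times\Sp} f_\e\,(\pl_t+v\cdot\nabla_x)\phi\,dt\,dx\,dv\;+\;\s\iiint_{\R_+\times Z_\e\times\Sp}(f_\e-Kf_\e)\,\phi\,dt\,dx\,dv.$$

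The plan is to integrate by parts in the first integral. Applying the divergence theorem on $Z_\e$, whose inward unit normal is $n_x$ (so outward normal $-n_x$), and using that $\phi$ has compact support in $t\in(0,+\infty)$ so that no time-boundary term appears, one gets
$$-\iiint_{\R_+\times Z_\e\times\Sp} f_\e\,(\pl_t+v\cdot\nabla_x)\phi \;=\; \iiint_{\R_+\times Z_\e\times\Sp}[(\pl_t+v\cdot\nabla_x)f_\e]\,\phi\,dt\,dx\,dv\;+\;\iint_{\R_+\times\pl Z_\e\times\Sp}(v\cdot n_x)\,f_\e\,\phi\,dS(x)\,dv\,dt.$$
Substituting the Boltzmann equation $(\pl_t+v\cdot\nabla_x)f_\e=-\s(f_\e-Kf_\e)$ satisfied by $f_\e$ in $Z_\e$ into the bulk integral produces an exact cancellation with the $\s$-contribution in $\mathcal{L}(\phi)$; only the surface integral survives, and it is by definition the action of $(v\cdot n_x)f_\e|_{\pl Z_\e\times\Sp}\,\delta_{\pl Z_\e}$ on $\phi$. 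This proves the lemma, once the integration by parts is justified.

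The principal technical obstacle is precisely justifying Green's formula for the mild solution $f_\e$, which a priori carries no regularity up to $\pl Z_\e$. I would handle this in the same spirit as the proof of Proposition \ref{Xif}: the Duhamel representation along characteristics shows that $f_\e$ is bounded and continuous along every flow line entering $Z_\e$ either from $\pl Z_\e$ or from $\{t=0\}$, from which a well-defined trace $f_\e|_{\pl Z_\e\times\Sp}$ is recovered, automatically concentrated on the outgoing set $\{v\cdot n_x<0\}$ thanks to the absorbing boundary condition. This trace fits the Cessenat-type framework for the free transport operator $\pl_t+v\cdot\nabla_x$, within which Green's formula is classical; alternatively one may mollify $f_\e$ along the flow, apply Green's formula to the regularization, and pass to the limit using the $L^\infty$ bound from (\ref{XiBounds}). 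Once that step is in place, the computation above closes the proof immediately.
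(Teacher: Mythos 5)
Your proof is correct and is essentially the paper's proof, unpacked: the paper invokes the distributional jump formula $\nabla_x\{f_\e\}=\{\nabla_x f_\e\}+f_\e|_{\pl Z_\e}n_x\delta_{\pl Z_\e}$ directly, while you derive that identity explicitly by pairing with a test function and integrating by parts on $Z_\e$, then substitute the Boltzmann equation; these are the same computation. Your closing remark about justifying the Green's formula for a mild $L^\infty$ solution (well-posed traces along characteristics, Cessenat-type arguments, or mollification along the flow) addresses a technical point the paper leaves implicit, and is a reasonable and correct supplement.
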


\begin{proof}
One has
$$
\pl_{t}\left\lbrace f_{\e}\right\rbrace=\left\lbrace \pl_{t} f_{\e}\right\rbrace 
$$ 
and
$$
\nabla_{x}\left\lbrace f_{\e}\right\rbrace=
\left\lbrace \nabla_{x}f_{\e}\right\rbrace +f_{\e}\mid_{\pl Z_{\e}\times\Sp}\delta_{\pl Z_{\e}}n_{x}
$$ 
in $\mathcal{D}'(\R_{+}^{*}\times\R^{2}\times\Sp)$. Hence
\begin{eqnarray*}
0&=&\left\lbrace \pl_{t}f_{\e}+v\cdot\nabla_{x}f_{\e}+\s(f_{\e}-Kf_{\e})\right\rbrace  
\\
&=& \pl_{t}\left\lbrace f_{\e}\right\rbrace+v\cdot\nabla_{x}\left\lbrace f_{\e}\right\rbrace
	+(v\cdot n_{x})f_{\e}\big|_{\pl Z_{\e}\times\Sp}\delta_{\pl Z_{\e}}
		+\s(\left\lbrace f_{\e}\right\rbrace-K\left\lbrace f_{\e}\right\rbrace)
\end{eqnarray*}
in $\mathcal{D}'(\R_{+}^{*}\times\R^{2}\times\Sp)$.
\end{proof}

A straightforward consequence of the scaling considered here is that the family of Radon measures 
$$
(v\cdot n_{x})f_{\e}\big|_{\pl Z_{\e}\times\Sp}\delta_{\pl Z_{\e}}
$$ 
is controlled uniformly as $\e\rightarrow 0^{+}$, in the following manner.

\begin{Lemma}\label{mg}
For each $R>0$, the family of Radon measures 
$$
(v\cdot n_{x})f_{\e}\big|_{\pl Z_{\e}\times\Sp}\delta_{\pl Z_{\e}}\big|_{[-R,R]^{2}\times\Sp}
$$
is bounded in\footnote{For each compact subset $K$ of $\R^N$, we denote by $\mathcal{M}(K)$ the 
space of signed Radon measures on $K$, i.e. the set of all real-valued continuous linear functionals 
on $C(K)$ endowed with the topology of uniform convergence on $K$.} $\mathcal{M}([-R,R]^{2}\times\Sp)$.
\end{Lemma}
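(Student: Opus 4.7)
I will show that $\mu_{\e}:=(v\cdot n_{x})f_{\e}\big|_{\pl Z_{\e}\times\Sp}\delta_{\pl Z_{\e}}\big|_{[-R,R]^{2}\times\Sp}$ is a signed Radon measure of uniformly bounded total variation by combining three simple ingredients: the nonpositivity of $\mu_\e$ forced by the absorbing boundary condition, the pointwise $L^\infty$ bound on $f_\e$ from (\ref{XiBounds}), and an elementary geometric estimate on the total length of $\pl Z_\e$ inside any fixed bounded square. The crucial cancellation is that, although the number of holes in $[-R,R]^2$ grows like $\e^{-2}$ as $\e\to0^+$, the circumference of each hole shrinks like $\e^2$, so the total boundary length stays $O(R^2)$ uniformly in $\e$.

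The first step is to argue that $\mu_\e\le0$. The boundary condition in $(\Xi_\e)$ imposes $f_\e=0$ on the incoming set $\{(x,v)\in\pl Z_\e\times\Sp:v\cdot n_x>0\}$, while (\ref{XiBounds}) gives $f_\e\ge0$ almost everywhere. Hence $(v\cdot n_x)\,f_\e\le0$ on all of $\pl Z_\e\times\Sp$, and the total variation of $\mu_\e$ is given by
$$\|\mu_\e\|_{TV}=\int_{\pl Z_\e\cap[-R,R]^{2}}\int_{\Sp}|v\cdot n_x|\,f_\e\,dv\,dS(x)\,.$$

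Next I bound this integral quantitatively. Because $|v|=1$ on $\Sp$ and $f_\e\le\|f^{in}\|_{L^\infty(\R^2\times\Sp)}$ a.e.\ by (\ref{XiBounds}), the integrand is pointwise dominated by $\|f^{in}\|_{L^\infty}$, so
$$\|\mu_\e\|_{TV}\le 2\pi\,\|f^{in}\|_{L^\infty}\cdot\mathrm{length}\bigl(\pl Z_\e\cap[-R,R]^2\bigr)\,.$$
The boundary $\pl Z_\e$ is a disjoint union of circles of radius $\e^2$ centered at the points of $\e\Z^2$. The number of such circles meeting $[-R,R]^2$ is $O(R^2/\e^2)$, each contributing length $2\pi\e^2$, so $\mathrm{length}(\pl Z_\e\cap[-R,R]^2)=O(R^2)$ uniformly as $\e\to0^+$. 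Combining the two displays gives $\|\mu_\e\|_{TV}\le C\,R^2\,\|f^{in}\|_{L^\infty}$ independent of $\e$, which is the required bound.

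The only technical delicacy is giving rigorous meaning to the trace $f_\e\big|_{\pl Z_\e\times\Sp}$ appearing in $\mu_\e$, since $f_\e$ is only a mild solution. This is however routine: Duhamel's formula (which was exploited in the proof of Proposition~\ref{Xif}) expresses $f_\e$ along characteristics and exhibits an a.e.\ defined trace on $\pl Z_\e\times\Sp$ that inherits the maximum-principle bound in (\ref{XiBounds}). Since this is the only information used above, no finer trace theory is needed, and the whole estimate is rigorous for the mild solution.
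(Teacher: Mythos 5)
Your proof is correct and follows essentially the same route as the paper's: bound $|v\cdot n_x|\,f_\e$ pointwise by $\|f^{in}\|_{L^\infty}$ using the maximum principle (\ref{XiBounds}), then observe that $[-R,R]^2$ contains $O(R^2/\e^2)$ holes each of circumference $2\pi\e^2$, so the total boundary length is $O(R^2)$ uniformly in $\e$. The sign observation ($\mu_\e\le 0$) and the remark about traces of mild solutions are accurate extras, but the core estimate coincides with the paper's argument.
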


\begin{proof}
The total mass of the measure 
$$
(v\cdot n_{x})f_{\e}\big|_{\pl Z_{\e}\times\Sp}\delta_{\pl Z_{\e}}\big|_{[-R,R]^{2}\times\Sp}
$$ 
is less than or equal to
$$
2\pi\|f_{\e}\|_{L^\infty(\R_+\times Z_\e\times\Sp)}
	\|\delta_{\pl Z_{\e}}\mid_{[-R,R]^{2}}\|_{\mathcal{M}([-R,R]^{2})}
$$ 
which is itself less than or equal to
$$
2\pi\|f^{in}\|_{L^\infty(\R^2\times\Sp)}
	\left\|\delta_{\pl Z_{\e}}\mid_{[-R,R]^{2}}\right\|_{\mathcal{M}([-R,R]^{2})}\,.
$$ 
Since $\delta_{\pl Z_{\e}}\mid_{[-R,R]^{2}}$ is the union of $O\left(\left( \frac{2R}{\e}\right)^{2}\right)$ 
circles of radius $\e^{2}$, 
$$
\|\delta_{\pl Z_{\e}}\mid_{[-R,R]^{2}}\|_{\mathcal{M}([-R,R]^{2})} 
	=O\left(\left( \frac{2R}{\e}\right)^{2}\right)2\pi\e^{2}=O(1)R^{2}
$$
as $\e\to 0^+$, whence the announced result.
\end{proof}

\subsection{The velocity averaging lemmas}

As is the case of all homogenization results, the proof of Theorem \ref{premiertheoreme} is based
on the strong $L^1_{loc}$ convergence of certain quantities defined in terms of $F_\e$. In the case
of kinetic models, strong $L^1_{loc}$ compactness is usually obtained by velocity averaging --- see
for instance \cite{Agosh84,GPS,GLPS} for the first results in this direction. Below, we recall a classical
result in velocity averaging that is a special case of theorem 1.8 in \cite{Bouchut}.

\begin{Prop}\label{average}
Let $p>1$ and assume that $f_{\e}\equiv f_{\e}(t,x,v)$ is a bounded family in 
$L^{p}_{loc}(\R^{+}_{t}\times\R^{d}_{x}\times\Spd_{v})$ such that
$$
\sup_{\e}\int_{0}^{T}\iint_{B(0,R)\times\Spd}|\pl_{t}f_{\e}+v\cdot\nabla_{x}f_{\e}|dxdvdt<+\infty
$$ 
for each $T>0$ and $R>0$. Then, for each $\psi\in C(\Spd\times\Spd)$, the family $\rho_{\psi}[f_{\e}]$,
defined by
$$
\rho_{\psi}[f_{\e}](t,x,v)=\int_{\Spd}f_{\e}(t,x,v)\psi(v,w)dw
$$ 
is relatively compact in $L^{1}_{loc}(\R^{+}_{t}\times\R^{d}_{x}\times\Spd_{v})$.
\end{Prop}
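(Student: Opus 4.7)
The statement is a classical velocity averaging lemma, and my plan is to follow the Fourier-analytic strategy that goes back to Golse--Lions--Perthame--Sentis, refined by DiPerna--Lions and later by Bouchut to accommodate $L^1$ source terms. The goal is to verify the Fréchet--Kolmogorov criterion for relative compactness in $L^1_{loc}$: uniform local $L^1$-boundedness (automatic from the $L^p_{loc}$ bound with $p>1$), local equi-integrability (inherited from the same bound), and equi-continuity in the $L^1_{loc}$ topology of translates.

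First I would reduce the setup. By density of $C^\infty(\Spd\times\Spd)$ in $C(\Spd\times\Spd)$ and the uniform $L^p_{loc}$ bound, it is enough to treat smooth $\psi$. A smooth space-time cutoff lets me assume $f_\e$ is supported in a fixed compact set of $(t,x)$, at the price of modifying the source $g_\e:=\pl_t f_\e+v\cdot\nabla_x f_\e$ by a term that remains bounded in $L^1_{t,x,v}$ uniformly in $\e$. Fourier transforming in $(t,x)$ turns the kinetic equation into the algebraic relation $i(\tau+v\cdot\xi)\hat f_\e(\tau,\xi,v)=\hat g_\e(\tau,\xi,v)$.

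The core of the argument is a frequency decomposition. For a threshold $\delta>0$ and large $|(\tau,\xi)|$, split the integration in $v$ into the \emph{resonant} set $A_{\delta}(\tau,\xi)=\{v\in\Spd:|\tau+v\cdot\xi|<\delta|(\tau,\xi)|\}$ and its complement. The set $A_\delta$ has surface measure $O(\delta^{1/2})$ uniformly in $(\tau,\xi)$, so by Hölder's inequality the resonant contribution to $\rho_\psi[f_\e]$ is small in $L^p$ in terms of the $L^p_{loc}$ norm of $f_\e$. On the non-resonant set, divide by $i(\tau+v\cdot\xi)$ to express $\hat f_\e$ through $\hat g_\e$, gaining a factor of order $\delta^{-1}|(\tau,\xi)|^{-1}$; integrating against $\psi(v,w)$ produces a fractional Sobolev regularity of $\rho_\psi[f_\e]$ at high frequency. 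Optimizing in $\delta$ as a function of $|(\tau,\xi)|$ balances the two contributions and yields $H^s_{loc}$ regularity for some $s\in(0,1/2)$ when $g_\e$ lies in $L^2_{loc}$, which in turn gives equi-continuity of translates.

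The main obstacle is that $g_\e$ is controlled only in $L^1_{loc}$, so the clean $L^2$-Fourier argument does not apply directly. The standard remedy, and the heart of the Bouchut refinement, is to truncate: write $g_\e=g_\e^M+r_\e^M$ with $g_\e^M=g_\e\mathbbm{1}_{|g_\e|\le M}$. The bounded piece $g_\e^M$ lies in $L^2_{loc}$ and is handled by the Fourier argument above, producing an equi-continuous family in $L^1_{loc}$. For the remainder $r_\e^M$, the corresponding kinetic equation has $L^1$ source of norm controlled by the uniform $L^1$ bound on $g_\e$; a separate argument (solving the transport equation by characteristics and estimating the velocity average by Fubini, using that the map $v\mapsto\psi(v,w)$ is smooth) shows that its contribution to $\rho_\psi$ has $L^1_{loc}$ norm that is $O(1)$ uniformly in $\e$ but whose equi-continuity defect tends to $0$ as $M\to\infty$, because the family $(g_\e)$ is bounded in $L^1_{loc}$ and the truncation error is equi-integrable in the Lebesgue sense on bounded sets. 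Letting $M\to\infty$ after a diagonal extraction produces the required equi-continuity and concludes the proof via Fréchet--Kolmogorov.
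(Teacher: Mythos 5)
The paper does not actually prove Proposition~\ref{average}: it is stated as a special case of Theorem~1.8 in the Bouchut--Golse--Pulvirenti monograph and simply cited. So the comparison is between your proposal and the standard proof that that reference (and the DiPerna--Lions--Meyer, Golse--Saint-Raymond line of work) gives.

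Your overall strategy --- Fourier split into resonant and non-resonant frequencies, $O(\delta^{1/2})$ measure of the resonant cap in $v$, decompose the source to cope with the $L^1$ hypothesis --- is the right template, and the reduction steps (smooth $\psi$, compact cutoff, Fr\'echet--Kolmogorov) are fine. However, there is a genuine gap in the treatment of the remainder $r_\e^M$. You write that ``the truncation error is equi-integrable in the Lebesgue sense on bounded sets'' because $(g_\e)$ is bounded in $L^1_{loc}$. This is false: $L^1$-boundedness of a family does not imply equi-integrability, and so $\|g_\e\mathbbm{1}_{|g_\e|>M}\|_{L^1_{loc}}$ need not tend to $0$ uniformly in $\e$ as $M\to\infty$. (Think of $g_\e$ concentrating towards a Dirac mass; the hypotheses of the Proposition allow this.) Consequently the equi-continuity defect of the remainder does not vanish as $M\to\infty$, and the diagonal argument does not close.

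The standard remedy is not to truncate the source by magnitude but to decompose the \emph{unknown} via the transport resolvent. Set, for $\lambda>0$,
\begin{equation*}
A_\lambda h(t,x,v)=\int_0^{\infty}e^{-\lambda s}h(t-s,x-sv,v)\,ds,
\end{equation*}
so that $(\pl_t+v\cdot\nabla_x+\lambda)A_\lambda h=h$. From $(\pl_t+v\cdot\nabla_x)f_\e=g_\e$ one obtains (after cutoff in $t$, say) the identity $f_\e=A_\lambda g_\e+\lambda A_\lambda f_\e$. Then
\begin{equation*}
\|A_\lambda g_\e\|_{L^1_{loc}}\le \tfrac{C}{\lambda}\sup_\e\|g_\e\|_{L^1_{loc}}\,,
\end{equation*}
which is small uniformly in $\e$ for $\lambda$ large, while $\lambda A_\lambda f_\e$ solves a transport equation whose right-hand side $\lambda f_\e-\lambda^2 A_\lambda f_\e$ is bounded in $L^p_{loc}$, $p>1$, uniformly in $\e$ (for fixed $\lambda$). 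To this piece your Fourier argument applies and yields $W^{s,1}_{loc}$ (or $H^s_{loc}$ after interpolation) bounds on the velocity averages, hence equi-continuity of translates. The $L^p_{loc}$ bound on $f_\e$ with $p>1$ provides the remaining ingredient (local equi-integrability of $\rho_\psi[f_\e]$) needed for Fr\'echet--Kolmogorov. Choosing $\lambda$ large to make the $A_\lambda g_\e$ contribution small, and then $h$ small for the regular piece at that fixed $\lambda$, gives the compactness. In short: your truncation $g_\e=g_\e^M+r_\e^M$ should be replaced by the resolvent splitting $f_\e=A_\lambda g_\e+\lambda A_\lambda f_\e$, which achieves the same goal (a regular piece plus a small $L^1$ piece) without ever needing equi-integrability of $g_\e$.
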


A straightforward consequence of Proposition \ref{average} is the following compactness result in
$L^1_{loc}$ strong, which is the key argument in the proof of Theorem $\ref{premiertheoreme}$.

\begin{Lemma}\label{averging}
Let $f_{\e}\equiv f_{\e}(t,x,v)$ be the family of solutions of the initial boundary value problem 
$(\Xi_{\e})$. Then the families
$$
K\left\lbrace f_{\e}\right\rbrace=\left\lbrace Kf_{\e}\right\rbrace 
$$
and
$$
\int_{\Sp}\{f_\e\}dv
$$
are relatively compact in $L^{1}_{loc}(\R_{+}\times\R^{2}\times\Sp)$ strong.
\end{Lemma}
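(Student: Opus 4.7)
The plan is to apply the velocity averaging lemma (Proposition~\ref{average}) to the family $\{f_\e\}$ extended by zero into the holes, and then to read off both compactness statements by suitable choices of test function or kernel.

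First I verify the hypotheses. By (\ref{XiBounds}), $\{f_\e\}$ is bounded in $L^\infty(\R_+\times\R^2\times\Sp)$ by $\|f^{in}\|_{L^\infty(\R^{2}\times\Sp)}$, hence in every $L^p_{loc}$ with $p>1$. For the transport derivative, Lemma~\ref{eqg} gives
$$
(\pl_{t}+v\cdot\nabla_{x})\{f_\e\}=-\s\{f_\e\}+\s K\{f_\e\}+(v\cdot n_x)f_\e\big|_{\pl Z_{\e}\times\Sp}\delta_{\pl Z_{\e}}\,.
$$
The bulk term $-\s\{f_\e\}$ is controlled by the $L^\infty$ bound above; Cauchy--Schwarz together with $k\in L^2(\Sp\times\Sp)$ shows that $K\{f_\e\}$ is bounded in $L^\infty_{loc}(dtdx;L^2(dv))$, and therefore in $L^1_{loc}(\R_+\times\R^2\times\Sp)$, uniformly in $\e$. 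The boundary source is bounded as a Radon measure on every compact set $[0,T]\times[-R,R]^2\times\Sp$, by Lemma~\ref{mg} integrated in $t$. Thus $(\pl_t+v\cdot\nabla_x)\{f_\e\}$ is uniformly bounded, as $\e\to 0^+$, in the space of locally finite Radon measures on $\R_+\times\R^2\times\Sp$.

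With these bounds in hand, Proposition~\ref{average}, extended to measure-valued source terms by a standard mollification in $(t,x)$ (see for instance \cite{GLPS}), yields the relative compactness in $L^1_{loc}(\R_+\times\R^2\times\Sp)$ of the velocity averages
$$
\rho_\psi[\{f_\e\}](t,x,v)=\int_\Sp\psi(v,w)\{f_\e\}(t,x,w)\,dw
$$
for every $\psi\in C(\Sp\times\Sp)$. Choosing $\psi\equiv 1$ gives at once the compactness of $\int_\Sp\{f_\e\}dv$. For $K\{f_\e\}=\{Kf_\e\}$, the kernel $k$ need not be continuous, so I approximate it in $L^2(\Sp\times\Sp)$ by continuous kernels $k_n$; applying the preceding with $\psi=k_n/(2\pi)$ gives the relative compactness of each $K_n\{f_\e\}$. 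The $L^\infty$ bound on $\{f_\e\}$ combined with Cauchy--Schwarz yields
$$
\|K\{f_\e\}(t,x,\cdot)-K_n\{f_\e\}(t,x,\cdot)\|_{L^2(\Sp)}\le\tfrac{1}{\sqrt{2\pi}}\|f^{in}\|_{L^\infty(\R^2\times\Sp)}\|k-k_n\|_{L^2(\Sp\times\Sp)}
$$
uniformly in $(t,x,\e)$, so that $K_n\{f_\e\}\to K\{f_\e\}$ in $L^2_{loc}\subset L^1_{loc}$ uniformly in $\e$. Being the uniform-in-$\e$ limit in $L^1_{loc}$ of a sequence of relatively compact families, $K\{f_\e\}$ is itself relatively compact in $L^1_{loc}(\R_+\times\R^2\times\Sp)$. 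The principal technical subtlety is the singular Radon-measure nature of the boundary source, which is not literally covered by the $L^1$ hypothesis stated in Proposition~\ref{average} but is handled by the mollification step indicated above.
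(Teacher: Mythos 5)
Your proof follows essentially the same route as the paper: the $L^\infty$ bound from the maximum principle, the transport equation for $\{f_\e\}$ from Lemma~\ref{eqg} with its bulk and boundary-measure source terms controlled via Lemma~\ref{mg}, an appeal to the velocity averaging Proposition~\ref{average}, and then an $L^2$-density argument to pass from continuous kernels to the given $k$. Your explicit flagging of the Radon-measure nature of the boundary source (and the mollification fix) is a point the paper handles implicitly by invoking the full strength of Theorem~1.8 in \cite{Bouchut}, and your use of Cauchy--Schwarz on $k\in L^2$ rather than $K1=1$ for the bulk term is an equally valid bookkeeping choice.
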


\begin{proof}
We recall that, by the Maximum Principle for $(\Xi_{\e})$, 
$$
|f_{\e}(t,x,v)|\leq\|f^{in}\|_{L^{\infty}(\R^{2}\times\Sp)}
$$ 
a.e. in $t\geq0,x\in Z_{\e}$ and $v\in\Sp$, so that
\begin{equation}
\label{Max}
\sup_{\e}\|\left\lbrace f_{\e}\right\rbrace\|_{L^{\infty}(\R_{+}\times\R^{2}\times\Sp)}
	\leq \|f^{in}\|_{L^{\infty}(\R^{2}\times\Sp)}.
\end{equation}
By Lemma $\ref{eqg}$, $\left\lbrace f_{\e}\right\rbrace$ satisfies the equation
\begin{eqnarray*}
\pl_{t}\left\lbrace f_{\e}\right\rbrace+v\cdot\nabla_{x}\left\lbrace f_{\e}\right\rbrace
	=\s(K\left\lbrace f_{\e}\right\rbrace-\left\lbrace f_{\e}\right\rbrace)
		-\delta_{\pl Z_{\e}}(v.n_{x})f_{\e}\mid_{\pl Z_{\e}\times\Sp}
\end{eqnarray*}
in $\mathcal{D}'(\R_{+}^{*}\times\R^{2}\times\Sp)$. Because of (\ref{Max}) and the fact that
the scattering kernel $k$ is a.e. nonnegative (see (\ref{kdef})), one has
$$
\begin{aligned}
\|\s(K\left\lbrace f_{\e}\right\rbrace-\left\lbrace f_{\e}\right\rbrace)\|_{L^{\infty}(\R_{+}\times\R^{2}\times\Sp)}
	&\leq\s(1+\|K1\|_{L^{\infty}(\Sp)})\|\left\lbrace f_{\e}\right\rbrace\|_{L^{\infty}(\R_{+}\times\R^{2}\times\Sp)}
\\
&=2\s\|\left\lbrace f_{\e}\right\rbrace\|_{L^{\infty}(\R_{+}\times\R^{2}\times\Sp)}
\end{aligned}
$$ 
since $K1=1$ (see again (\ref{kdef}).) Besides the family of Radon measures 
$$
\mu_{\e}=f_{\e}\mid_{\pl Z_{\e}\times\Sp}(v\cdot n_{x})\delta_{\pl Z_{\e}}
$$ 
satisfies
$$
\sup_{\e}\int_{[0,T]\times\overline{B(0,R)}\times\Sp}|\mu_{\e}|<+\infty
$$ 
for each $T>0$ and $R>0$ according to lemma $\ref{mg}$. 

Applying the Velocity Averaging result recalled above implies that the family 
$$
\int_{\Sp}g_\e dv
$$
is relatively compact in $L_{loc}^{1}(\R_{+}\times\R^{2}\times\Sp)$.

By density of $C(\Sp\times\Sp)$ in $L^2(\Sp\times\Sp)$, replacing the integral 
kernel $k$ with a continuous approximant and applying the Velocity Averaging 
Proposition \ref{average} in the same way as above, we conclude that the family 
$Kg_\e$ is also relatively compact in $L_{loc}^{1}(\R_{+}\times\R^{2}\times\Sp)$.
\end{proof}

\subsection{Uniqueness for the homogenized equation}

Consider the Cauchy problem with unknown $G\equiv G(t,s,x,v)$
$$
\left\{
\begin{array}{ll}
(\pl_{t}+v\cdot\nabla_{x}+\pl_{s})G=-\s G+\displaystyle\frac{\dot{p}(t\wedge s)}{p(t\wedge s)}G, 
	&t,s>0, x\in\R^{2},v\in\Sp,
\\	\\
G(t,0,x,v)=S(t,x,v),&t>0, (x,v)\in\R^{2}\times\Sp,
\\	\\
G(0,s,x,v)=G^{in}(s,x,v), &s>0, (x,v)\in\R^{2}\times\Sp.
\end{array}
\right.
$$
If, for a.e. $(t,s,x,v)\in\R_+\times\R_+\times\R^2\times\Sp$, the function 
$\tau\mapsto G(t+\tau,s+\tau,x+\tau v,v)$ is $C^1$ in $\tau>0$, then, since the function $p\in C^1(\R_+)$ 
and $p>0$ on $\R_+$, one has 
$$
\begin{aligned}
{}&\left(\frac{d}{d\tau}+\s-\frac{\dot{p}(t\wedge s+\tau)}{p(t\wedge s+\tau)}\right)G(t+\tau,s+\tau,x+\tau v,v)
\\
&\qquad\qquad=e^{-\s \tau}p(t\wedge s+\tau)
	\frac{d}{d\tau}\left(\frac{e^{\s \tau}G(t+\tau,s+\tau,x+\tau v,v)}{p(t\wedge s+\tau)}\right)=0\,.
\end{aligned}
$$
Hence
$$
\Gamma:\,\tau\mapsto\frac{e^{\s \tau}G(t+\tau,s+\tau,x+\tau v,v)}{p(t\wedge s+\tau)}
$$
is a constant. Therefore
$$
\Gamma(0)=\left\{
\begin{array}{ll}
\Gamma(-t)\quad&\hbox{ if }t<s,
\\
\Gamma(-s)\quad&\hbox{ if }s<t,
\end{array}
\right.
$$
so that
$$
G(t,s,x,v)=\ind_{t<s}e^{-\s t}p(t)G^{in}(s-t,x-tv,v)+\ind_{s<t}e^{-\s s}p(s)S(t-s,x-sv,v)\,.
$$

\begin{Prop}\label{eqF}
Assume that $f^{in}\in L^\infty(\R^2\times\Sp)$. Then the problem ($\Sigma$) has a unique mild solution 
$F$ such that
$$
(t,x,v)\mapsto\int_0^{+\infty}|F(t,s,x,v)|ds\hbox{ belongs to }L^\infty([0,T]\times\R^2\times\Sp)
$$
for each $T>0$. This solution satisfies
$$
\begin{aligned}
F(t,s,x,v)&=\ind_{t<s}\s e^{-\s t}p(t)f^{in}(x-tv,v)
\\
&+\ind_{s<t}\s e^{-\s s}p(s)\int_0^{+\infty}KF(t-s,\tau,x-sv,v)d\tau
\end{aligned}
$$
for a.e. $(t,s,x,v)\in\R_+\times\R_+\times\R^2\times\Sp$. 

Besides, $F\ge 0$ a.e. on $\R_+\times\R_+\times\R^2\times\Sp$ if $f^{in}\ge 0$ a.e. on $\R^2\times\Sp$.
\end{Prop}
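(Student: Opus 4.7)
The plan is to imitate the fixed-point argument of Proposition \ref{Xif}, which applies with essentially no change because passing to the homogenized problem only introduces the factor $p(t\wedge s)$ via the characteristic formula stated just before the proposition, and $p$ is smooth and bounded between $0$ and $1$ on $\R_+$. First I would rewrite $(\Sigma)$ as a Duhamel-type integral equation. Applying the characteristic formula with $G^{in}(s,x,v)=\s e^{-\s s}f^{in}(x,v)$ and boundary data $S(t,x,v)=\s\int_0^{+\infty}KF(t,s,x,v)ds$ transforms $(\Sigma)$ into
$$F=F_2+\mathcal{T}F,$$
where $F_2$ is the explicit free-streaming term contributed by the initial data and
$$\mathcal{T}G(t,s,x,v):=\ind_{s<t}\s e^{-\s s}p(s)\int_0^{+\infty}KG(t-s,\tau,x-sv,v)d\tau.$$

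Next I would set up the Banach space analogous to $\mathcal{X}_T$, namely
$$\mathcal{Y}_T:=\Bigl\{G\text{ measurable on }\R_+^2\times\R^2\times\Sp\,\Big|\,(t,x,v)\mapsto\int_0^{+\infty}|G(t,s,x,v)|ds\in L^\infty([0,T]\times\R^2\times\Sp)\Bigr\},$$
equipped with the obvious norm. The key estimate is that $\mathcal{T}$ is a Volterra-type operator: using $p\le 1$ on $\R_+$ together with $\|KG\|_{L^\infty(\Sp)}\le\|G\|_{L^\infty(\Sp)}$ (a consequence of $K1=1$ and the nonnegativity of $k$), one obtains
$$\Bigl\|\int_0^{+\infty}|\mathcal{T}G(t,s,\cdot,\cdot)|ds\Bigr\|_{L^\infty(\R^2\times\Sp)}\le\s\int_0^t\Bigl\|\int_0^{+\infty}|G(t_1,\tau,\cdot,\cdot)|d\tau\Bigr\|_{L^\infty(\R^2\times\Sp)}dt_1,$$
and iterating yields the nilpotency-like bound $\|\mathcal{T}^n G\|_{\mathcal{Y}_T}\le\frac{(\s T)^n}{n!}\|G\|_{\mathcal{Y}_T}$.

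With this estimate in hand, existence follows by defining $F:=\sum_{n\ge0}\mathcal{T}^n F_2$, which converges normally in $\mathcal{Y}_T$ for every $T>0$ because $F_2\in\mathcal{Y}_T$ (as $f^{in}\in L^\infty$ and $\int_0^{+\infty}\s e^{-\s s}ds=1$). Uniqueness is then immediate: any other solution $F'\in\mathcal{Y}_T$ satisfies $F-F'=\mathcal{T}^n(F-F')$ for all $n\ge1$, and the norm bound forces $F=F'$. Finally, nonnegativity is preserved because $\mathcal{T}$ maps nonnegative functions to nonnegative functions (since $k\ge 0$ and $p\ge 0$), and $F_2\ge 0$ whenever $f^{in}\ge 0$, so every partial sum of the Neumann series is nonnegative a.e., hence so is $F$.

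The proof is essentially routine once the Duhamel formulation is in place; the only point where one must be slightly careful is the bookkeeping of the factor $p(s)$ in $\mathcal{T}$ and of $p(t)$ in $F_2$. No obstacle arises because $p$ is continuous and bounded by $1$ on $[0,T]$, so these factors do not spoil the $L^\infty$-in-$(t,x,v)$, $L^1$-in-$s$ structure on which the whole argument rests. In short, the proof is a verbatim transcription of Proposition \ref{Xif} with $\e\tau_\e(x/\e,v)$ replaced by $+\infty$ (the spatial domain is now all of $\R^2$) and an additional multiplicative factor $p(\mi)$ built into the characteristic transport.
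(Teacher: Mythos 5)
Your proof is correct and follows essentially the same route as the paper: the Duhamel reformulation via characteristics, the Banach space $\mathcal{Y}_T$ of $L^\infty$-in-$(t,x,v)$, $L^1$-in-$s$ functions, the Volterra estimate $\|\mathcal{Q}^nG\|_{\mathcal{Y}_T}\le (\s T)^n/n!\,\|G\|_{\mathcal{Y}_T}$, the Neumann series for existence, the nilpotency bound for uniqueness, and positivity of the iterates. The only difference is notational (your $\mathcal{T}$ is the paper's $\mathcal{Q}$); the paper likewise treats this as a verbatim adaptation of Proposition~\ref{Xif}.
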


\begin{proof}
That a mild solution of the problem ($\Sigma$), should it exist, satisfies the integral equation above 
follows from the computation presented before the proposition. 

As above, let $\mathcal{Y}_T$ be, for each $T>0$, the set of measurable functions $G$ defined a.e.
on $\R_+\times\R_+\times\R^2\times\Sp$ and such that
$$
(t,x,v)\mapsto\int_0^{+\infty}|G(t,s,x,v)|ds\hbox{ belongs to }L^\infty([0,T]\times\R^2\times\Sp)\,,
$$
which is a Banach space for the norm
$$
\|G\|_{\mathcal{Y}_T}=\left\|\int_0^{+\infty}|G(\cdot,s,\cdot,\cdot)|ds\right\|_{L^\infty([0,T]\times Z_\e\times\Sp)}\,.
$$
Next, for each $G\in\mathcal{Y}_T$, we define
$$
\mathcal{Q}G(t,s,x,v):=\ind_{s< t}\s e^{-\s s}p(s)\int_0^{+\infty}KG(t-s,\tau,x-sv,v)d\tau\,.
$$

Since $0<e^{-\s s}p(s)\le 1$, the integral kernel $k\ge 0$ on $\Sp\times\Sp$ and $K1=1$ by (\ref{kdef}),
one has
$$
\int_0^{+\infty}|\mathcal{Q}G(t,s,x,v)|ds
	\le\s\int_0^t\left\|\int_0^{+\infty}|G(t-s,\tau,\cdot,\cdot)|d\tau\right\|_{L^\infty(\R^2\times\Sp)}ds
$$
a.e. in $(t,x,v)\in[0,T]\times\R^2\times\Sp$, meaning that
$$
\begin{aligned}
{}&\left\|\int_0^{+\infty}|\mathcal{Q}^nG(t,s,\cdot,\cdot)|ds\right\|_{L^\infty(\R^2\times\Sp)}  
\\
&\qquad\le
\s\int_0^t\left\|\int_0^{+\infty}|\mathcal{Q}^{n-1}G(t_1,s,\cdot,\cdot)|ds\right\|_{L^\infty(\R^2\times\Sp)}dt_1
\\
&\qquad\le
\s^n\int_0^t\ldots\int_0^{t_{n-1}}\left\|\int_0^{+\infty}|G(t_n,s,\cdot,\cdot)|ds\right\|_{L^\infty(\R^2\times\Sp)}dt_n\ldots dt_1\,.
\end{aligned}
$$
In particular
$$
\|\mathcal{Q}^nG\|_{\mathcal{Y}_T}\le\frac{(\s T)^n}{n!}\|G\|_{\mathcal{Y}_T}\,.
$$

The integral equation in the statement of the proposition is
$$
F=F_2+\mathcal{Q}F
$$
where
$$
F_2(t,s,x,v)=\ind_{t<s}\s e^{-\s t}p(t)f^{in}(x-tv,v)\,.
$$
Therefore, arguing as in the proof of Proposition \ref{Xif}, one obtains a mild solution of ($\Sigma$) as 
the sum of the series
$$
F=\sum_{n\ge 0}\mathcal{Q}^nF_2\,,
$$
which is normally convergent in the Banach space $\mathcal{Y}_T$ for each $T>0$. 

Should there exist another mild solution, say $F'$, it would satisfy
$$
(F-F')=\mathcal{Q}(F-F')=\ldots=\mathcal{Q}^n(F-F')
$$
for all $n\ge 0$, so that
$$
\|F-F'\|_{\mathcal{Y}_T}=\|\mathcal{Q}^n(F-F')\|_{\mathcal{Y}_T}\le\frac{(\s T)^n}{n!}\|F-F'\|_{\mathcal{Y}_T}\to 0
$$
as $n\to+\infty$, which implies that $F=F'$ a.e. on $\R_+\times\R_+\times\R^2\times\Sp$.

Finally, $\mathcal{Q}F\ge 0$ a.e. on $\R_+\times\R_+\times\R^2\times\Sp$ if $F\ge 0$ a.e. on 
$\R_+\times\R_+\times\R^2\times\Sp$. Since $F$ is given by the series above, one has $F\ge 0$ 
a.e. on $\R_+\times\R_+\times\R^2\times\Sp$ whenever $f^{in}\ge 0$ a.e. on $\R^2\times\Sp$.
\end{proof}

\subsection{Proof of the homogenization theorem.}

Start from the decomposition (\ref{FirstF}) of $F_\e$. Passing to the limit as $\e\to 0^+$ in the
term $F_{2,\e}$ is easy. Indeed, by Lemma \ref{Golse-Caglioti}
\begin{equation}
\label{GClimit}
\{\ind_{t<\e\tau_{\e}(\frac{x}{\e},v)}\}\rightharpoonup p(t)
\end{equation}
in $L^{\infty}(\R^{2}_{x}\times\Sp_{v})$ weak-$*$ for each $t>0$, as $\e\rightarrow0^{+}$. 
Hence
\begin{equation}
\begin{aligned}
\{F_{2,\e}\}(t,s,x,v)=&\ind_{t<s}e^{-\s s}f^{in}(x-tv,v)\{\ind_{t<\e\tau_{\e}(\frac{x}{\e},v)}\}
\\
&\rightharpoonup\ind_{t<s}e^{-\s s}f^{in}(x-tv,v)p(t) =:F_{2}(t,s,x,v)
\end{aligned}
\end{equation}
in $L^{\infty}(\R_{t}^{+}\times\R_{s}^{+}\times\R^{2}_{x}\times\Sp_{v})$ weak-$*$ as $\e\rightarrow0^{+}$.

Next, we analyze the term $F_{1,\e}$; this is obviously more difficult as this term depends on 
the (unknown) solution $F_\e$ itself. 

We recall the uniform bound
$$
\sup_{\e}\|\left\{f_{\e}\right\}\|_{L^{\infty}(\R_{+}\times\R^{2}\times\Sp)}
	\leq \|f^{in}\|_{L^{\infty}(\R^{2}\times\Sp)}
$$
--- see Proposition \ref{Xif} b), so that, by the Banach-Alaoglu theorem
\begin{equation}
\label{conve}
\left\{f_{\e}\right\}\rightharpoonup f\ \mbox{in\ }L^{\infty}(\R_{+}\times\R^{2}\times\Sp)\ \mbox{weak-$*$}
\end{equation}
for some $f\in L^{\infty}(\R_{+}\times\R^{2}\times\Sp)$, possibly after extracting a subsequence
of $\e\to 0^+$.

Thus, applying the strong compactness Lemma $\ref{averging}$ shows that
$$
K\left\{f_{\e}\right\}\rightarrow Kf\ \mbox{in\ }L^{1}_{loc}(\R_{+}\times\R^{2}\times\Sp)\ \mbox{strong}
$$ 
as $\e\rightarrow0^{+}$.

This and the weak-$*$ convergence in Lemma \ref{Golse-Caglioti} imply that
\begin{equation} 
\begin{aligned}
\left\{F_{1,\e}\right\}=&\ind_{s<t}\s e^{-\s s}K\left\{f_{\e}\right\}(t-s,x-sv,v)\ind_{s<\e\tau_{\e}(\frac{x}{\e},v)}
\\
&\rightharpoonup\ind_{s<t}\s e^{-\s s}Kf(t-s,x-sv,v)p(s)
\end{aligned}
\end{equation}
in $L^{1}_{loc}(\R_{+}\times\R_{+}\times\R^{2}\times\Sp)$ weak as $\e\rightarrow0^{+}$. Therefore 
$$
\begin{aligned}
\left\{F_{\e}\right\}(t,s,x,v)\rightharpoonup&\ind_{s<t}\s e^{-\s s}Kf(t-s,x-sv,v)p(s)+F_{2}(t,s,x,v)
\\
&=:\tilde{F}(t,s,x,v)
\end{aligned}
$$ 
in $L^{1}_{loc}(\R_{+}\times\R_{+}\times\R^{2}\times\Sp)$ weak as $\e\rightarrow0^{+}$.

Fix $T>0$; then, for $t\in[0,T]$, one has
$$
\int_{0}^{\infty}F_{\e}(t,s,x,v)ds
	=\int_{0}^{T} F_{1,\e}(t,s,x,v)ds+e^{-\s t}f^{in}(x-tv,v)\ind_{t<\e\tau_{\e}(\frac{x}{\e},v)}
$$ 
since $F_{1,\e}$ is supported in $s\leq t\leq T$, so that
\begin{equation}
\begin{aligned}
\int_{0}^{\infty}\left\{F_{\e}\right\}(t,s,x,v)ds\rightharpoonup 
	&\int_{0}^{T}\ind_{s\leq t}Kf(t-s,x-vs,v)\s e^{-\s s}p(s)ds 
\\
&+f^{in}(x-tv,v)e^{-\s t}p(t)
\\
&=\int_{0}^{\infty}\tilde{F}(t,s,x,v)ds
\end{aligned}
\end{equation}
in $L^{1}_{loc}(\R_{+}\times\R^{2}\times\Sp)$ weakly as $\e\rightarrow0^{+}$. On the other hand
$$
\int_{0}^{\infty}\left\{F_{\e}\right\}(t,s,x,v)ds=\left\{f_{\e}\right\}(t,x,v)\rightharpoonup f(t,x,v)
$$
in $L^{\infty}(\R_{+}\times\R^{2}\times\Sp)$ weak-$*$ as $\e\rightarrow0^{+}$ --- and therefore also
in $L^{1}_{loc}(\R_{+}\times\R^{2}\times\Sp)$ weak as $\e\rightarrow0^{+}$. By uniqueness of the 
limit, we conclude that
\begin{equation}
\label{convsuite}
f(t,x,v)=\int_{0}^{\infty}\tilde{F}(t,s,x,v)ds\ \mbox{a.e. in\ } (t,x,v)\in \R_{+}\times\R^{2}\times\Sp
\end{equation}
so that $\tilde{F}$ satisfies
\begin{equation}
\begin{aligned}
\tilde{F}(t,s,x,v)&=\ind_{s<t}\s e^{-\s s}K\left(\int_{0}^{\infty}\tilde{F}(t-s,u,x-sv,\cdot)du\right)(v)p(s)
\\
&+\ind_{t<s}\s e^{-\s s}f^{in}(x-tv,v)p(t)
\end{aligned}
\end{equation}
a.e. in $(t,s,x,v)\in \R_{+}\times\R_{+}\times\R^{2}\times\Sp.$ By Proposition $\ref{eqF}$, this 
means that $\tilde{F}$ is a solution of the Cauchy problem $(\Sigma)$. 

By uniqueness of the solution of $(\Sigma)$, we conclude that $\tilde{F}=F$, and that the
whole family
$$
F_{\e}\rightharpoonup F\ \mbox{in\ }L^{1}_{loc}(\R_{+}\times\R_{+}\times\R^{2}\times\Sp)
$$ 
weakly as $\e\rightarrow0^{+}$.

Finally, $(\ref{conve})$ and $(\ref{convsuite})$ imply that
$$
\left\lbrace  f_{\e}\right\rbrace\rightharpoonup f=\int_{0}^{\infty}Fds
$$ 
in $L^{\infty}(\R_{+}\times\R^{2}\times\Sp)$ weak-$*$ as $\e \rightarrow0^{+}$, which concludes
the proof of Theorem \ref{premiertheoreme}.
\rightline{$\Box$}

\section{Asymptotic behavior of the total mass in the long time limit}

The formulation of the homogenized equation (problem ($\Sigma$)) as an integro-differential
equation set on the extended phase space involving the additional variable $s$ is of considerable
importance in understanding the asymptotic behavior of the total mass of the particle system 
as the time variable $t\to+\infty$. Indeed, this formulation implies that the total mass of the particle 
system satisfies a renewal equation, i.e. a class of integral equations for which a lot is known on 
the asymptotic behavior of the solutions in the long time limit --- see for instance in \cite{Feller} the
basic results on renewal type integral equations.

\subsection{The renewal PDE governing the mass}

We begin with a proof of Proposition \ref{renouv}.

\begin{proof}
That $\mu$ is a mild solution of the renewal PDE means that, for a.e. $(t,s)\in\R_+\times\R_+$,
$$
\begin{aligned}
\mu(t,s)=\ind_{t<s}\s e^{-\s(s-t)}e^{-\s t}p(t)+\ind_{s<t}e^{-\s s}p(s)\int_0^{+\infty}\mu(t-s,\tau)d\tau
\\
=\s e^{-\s s}p(\mi)\left(\ind_{t<s}+\ind_{s<t}\int_0^{+\infty}\mu(t-s,\tau)d\tau\right)\,.
\end{aligned}
$$

For each $T>0$, define
$$
\mathcal{R}\mu(t,s)=\ind_{s<t}\s e^{-\s s}p(s)\int_0^{+\infty}\mu(t-s,\tau)d\tau
$$
a.e. in $(t,s)\in\R_+\times\R_+$. Obviously, for each $\phi\in L^\infty([0,T];L^1(\R_+))$ and
a.e. $t\ge 0$, 
$$
\begin{aligned}
\|\mathcal{R}\phi(t,\cdot)\|_{L^1(\R_+)}
\le
\int_0^t\s e^{-\s(t-s)}p(t-s)\|\phi(s,\cdot)\|_{L^1(\R_+)}ds
\\
\le
\s\int_0^t\|\phi(s,\cdot)\|_{L^1(\R_+)}ds\,,
\end{aligned}
$$
so that, for each $n\ge 0$, one has
$$
\begin{aligned}
\|\mathcal{R}^n\phi(t,\cdot)\|_{L^1(\R_+)}
\le
\int_0^t\int_0^{t_1}\ldots\int_0^{t_{n-1}}\|\phi(t_n,\cdot)\|_{L^1(\R_+)}dt_n\ldots dt_1
\\
\le
\frac{(\s t)^n}{n!}\|\phi\|_{L^\infty([0,T];L^1(\R_+))}
\end{aligned}
$$
a.e. in $t\in\R_+$.

Arguing as in the proof of Proposition \ref{Xif}, we see that the renewal PDE has a unique
mild solution $\mu\in L^\infty([0,T];L^1(\R_+))$ for all $T>0$, which is given by the series
$$
\mu=\sum_{n\ge 0}\mathcal{R}^n(\mu^{in})
$$
where
$$
\mu^{in}(s):=\s e^{-\s s}\,.
$$

Obviously $\mathcal{R}\phi\ge 0$ a.e. on $\R_+\times\R_+$ if $\phi\ge 0$ a.e. on
$\R_+\times\R_+$, so that $\mu\ge 0$ a.e. on $\R_+\times\R_+$. Besides, for each
$T>0$, 
$$
\|\mu\|_{L^\infty([0,T];L^1(\R_+))}
	\le\sum_{n\ge 0}\frac{(\s T)^n}{n!}\|\mu^{in}\|_{L^1(\R_+)}=e^{\s T}\,,
$$
which implies in turn that
$$
0\le\mu(t,s)\le\s e^{-\s s}p(\mi)\left(\ind_{t<s}+\ind_{s<t}e^{\s T}\right)\le\s e^{\s T}e^{-\s s}
$$
a.e. in $(t,s)\in[0,T]\times\R_+$.

Finally, let $F$ be the mild solution of the problem ($\Sigma$) obtained in Proposition
\ref{Xif}. Since $F\ge 0$ a.e. on $\R_+\times\R_+\times\R^2\times\Sp$ is measurable,
one can apply the Fubini theorem to show that
$$
\begin{aligned}
m(t,s):&=\tfrac1{2\pi}\iint_{\R^2\times\Sp}F(t,s,x,v)dxdv
\\
&=
\ind_{t<s}\s e^{-\s t}p(t)\tfrac1{2\pi}\iint_{\R^2\times\Sp}f^{in}(x-tv,v)dxdv
\\
&+
\ind_{t<s}\s e^{-\s t}p(s)\int_0^\infty\tfrac1{2\pi}\iint_{\R^2\times\Sp}KF(t-s,\tau,x-sv,v)dxdvd\tau
\\
&=
\ind_{t<s}\s e^{-\s t}p(t)\tfrac1{2\pi}\iint_{\R^2\times\Sp}f^{in}(y,v)dydv
\\
&+
\ind_{t<s}\s e^{-\s t}p(s)\int_0^\infty\tfrac1{2\pi}\iint_{\R^2\times\Sp}KF(t-s,\tau,y,v)dydvd\tau
\\
&=
\ind_{t<s}\s e^{-\s t}p(t)\tfrac1{2\pi}\iint_{\R^2\times\Sp}f^{in}(y,v)dydv
\\
&+
\ind_{t<s}\s e^{-\s t}p(s)\int_0^\infty\tfrac1{2\pi}\iint_{\R^2\times\Sp}F(t-s,\tau,y,w)dydwd\tau
\\
&=
\ind_{t<s}\s e^{-\s t}p(t)\tfrac1{2\pi}\iint_{\R^2\times\Sp}f^{in}(x-tv,v)dxdv
\\
&+
\ind_{t<s}\s e^{-\s t}p(s)\int_0^\infty m(t-s,\tau)d\tau\,,
\end{aligned}
$$
where the second equality follows from the substitution $y=x-tv$ that leaves the Lebesgue
measure invariant, while the third equality follows from the identity
$$
\tfrac1{2\pi}\int_{\Sp}k(v,w)dv=1\,,
$$
which implies that
$$
\tfrac1{2\pi}\int_{\Sp}KF(t-s,\tau,y,v)dv=\tfrac1{2\pi}\int_{\Sp}F(t-s,\tau,y,w)dw\,.
$$
In other words, 
$$
m(t,s)\hbox{ satisfies the same integral equation as }\frac{\mu(t,s)}{2\pi}\iint_{\R^2\times\Sp}f^{in}(y,v)dydv.
$$

Now the solution $f_\e$ of ($\Xi_\e$) satisfies
$$
f_\e\ge 0\hbox{ a.e. on }\R_+\times\R^2\times\Sp\hbox{ and }
	\iint_{\R^2\times\Sp}f_\e(t,y,v)dydv\le\iint_{\R^2\times\Sp}f^{in}(y,v)dydv\,,
$$
which implies by Theorem \ref{premiertheoreme} that
$$
\int_{|y|\le R}\int_{\Sp}f_\e(t,y,v)dvdy
	\rightharpoonup\int_0^{+\infty}\int_{|y|\le R}\int_{\Sp}F(t,s,y,v)dvdyds\,.
$$
Hence, by Fatou's lemma
$$
\begin{aligned}
\int_0^{+\infty}\int_{|y|\le R}\int_{\Sp}F(t,s,y,v)dvdyds
	&\le\varliminf_{\e\to 0^+}\iint_{\R^2\times\Sp}f_\e(t,x,v)dxdv
\\
&\le\iint_{\R^2\times\Sp}f^{in}(y,v)dydv\,,
\end{aligned}
$$
a.e. in $t\ge 0$.

Letting $R\to+\infty$ in the inequality above, we see that $m\in L^\infty(\R_+;L^1(\R_+))$
and we have proved that the difference
$$
\Lambda(t,s)=m(t,s)-\frac{\mu(t,s)}{2\pi}\iint_{\R^2\times\Sp}f^{in}(y,v)dydv
$$
satisfies
$$
\Lambda\in L^\infty(\R_+;L^1(\R_+))\quad\hbox{ and }\quad\Lambda=\mathcal{R}\Lambda\,.
$$
By the same uniqueness argument as in the proof of Proposition \ref{eqF}, we conclude that $\Lambda=0$
a.e. on $\R_+\times\R_+$.
\end{proof}

\subsection{The total mass in the vanishing $\e$ limit}

By Theorem \ref{premiertheoreme}, the solution $f_\e$ of ($\Xi_\e$) satisfies
$$
\{f_\e\}\rightharpoonup\int_0^{+\infty}Fds\hbox{ in }L^\infty(\R_+\times\R^2\times\Sp)\hbox{ weak-$*$};
$$
therefore, checking that
$$
\iint_{\R^2\times\Sp}\{f_\e\}dxdv\rightharpoonup\int_0^{+\infty}\iint_{\R^2\times\Sp}Fdxdvds=:2\pi M(t)
$$
reduces to proving that there is no mass loss at infinity in the $x$ variable.

\begin{Lemma}\label{Mlemma}
Under the same assumptions as in Theorem \ref{premiertheoreme}
$$
\tfrac{1}{2\pi}\iint_{Z_\e\times\Sp}f_{\e}(t,x,v)dxdv
=
\tfrac{1}{2\pi}\iint_{\R^{2}\times\Sp}\{f_{\e}\}(t,x,v)dxdv\to M(t)
$$
strongly in $L^{1}_{loc}(\R_+)$ as $\e\to 0^+$.
\end{Lemma}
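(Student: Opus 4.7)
My plan is to reduce the strong $L^{1}_{loc}$ convergence of $M_\e(t):=\tfrac{1}{2\pi}\iint_{Z_\e\times\Sp}f_\e(t,x,v)\,dx\,dv$ to two ingredients: the weak-$*$ convergence provided by Theorem \ref{premiertheoreme} combined with a tightness statement in the $x$-variable, and a compactness argument that exploits the monotonicity of $t\mapsto M_\e(t)$.

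First I would check that each $M_\e$ is nonnegative, nonincreasing on $\R_+$, and uniformly bounded by $\tfrac{1}{2\pi}\iint_{\R^2\times\Sp}f^{in}\,dxdv$. Integrating the equation in $(\Xi_\e)$ over $Z_\e\times\Sp$, the scattering term vanishes because $K1=1$ (from (\ref{kdef})) forces $\iint(f_\e-Kf_\e)\,dxdv=0$, while Green's identity combined with the absorbing boundary condition ($f_\e=0$ on $\{v\cdot n_x>0\}$ and $f_\e\ge 0$ on $\{v\cdot n_x<0\}$) yields
$$
\frac{d}{dt}M_\e(t)=\frac{1}{2\pi}\int_\Sp\int_{\pl Z_\e}f_\e(v\cdot n_x)\,d\sigma\,dv\le 0;
$$
the uniform bound is then (\ref{XiBounds}).

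The main obstacle, in my view, is tightness in $x$: since $f^{in}$ is only in $L^1\cap L^\infty$, not compactly supported, one has to rule out mass escaping to infinity. Given $T>0$ and $\delta>0$, I use $f^{in}\in L^1$ to pick $R_0$ with $\iint_{|x|>R_0}f^{in}\,dxdv<\delta$ and split $f^{in}=f^{in}_1+f^{in}_2$ with $f^{in}_1=f^{in}\ind_{|x|\le R_0}$. By linearity of $(\Xi_\e)$, $f_\e=f_\e^{(1)}+f_\e^{(2)}$ where $f_\e^{(i)}$ solves $(\Xi_\e)$ with initial datum $f^{in}_i$. Finite propagation speed (characteristics move at unit speed, scattering preserves this, and the absorbing BC only lowers values) forces $f_\e^{(1)}(t,x,v)=0$ as soon as $|x|>R_0+t$, while the $L^1$-bound in (\ref{XiBounds}) applied to $f_\e^{(2)}$ gives $\iint\{f_\e^{(2)}\}(t,\cdot)\,dxdv\le\delta$. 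Hence
$$
\sup_{\e>0,\,0\le t\le T}\iint_{|x|>R_0+T,\,v\in\Sp}\{f_\e\}(t,x,v)\,dx\,dv<\delta.
$$
Combined with the weak-$*$ convergence $\{f_\e\}\rightharpoonup\int_0^{+\infty}F\,ds$ in $L^\infty(\R_+\times\R^2\times\Sp)$ from Theorem \ref{premiertheoreme}, tested against $\varphi(t)\ind_{|x|\le R}/(2\pi)$ with $\varphi\in L^1([0,T])$, and then letting $R\to+\infty$ (with monotone convergence on the limit side, using $F\ge 0$ from Proposition \ref{eqF}), this shows $M_\e\rightharpoonup M$ weakly-$*$ in $L^\infty([0,T])$ for every $T>0$.

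Finally, since $(M_\e)$ is uniformly bounded and each $M_\e$ is nonincreasing, Helly's selection theorem extracts, from any subsequence, a further subsequence converging pointwise a.e.\ on $[0,T]$ to a nonincreasing limit; by the weak-$*$ convergence just established, that limit must equal $M$. The ``every subsequence has a further subsequence'' principle then forces the entire family $M_\e$ to converge to $M$ a.e.\ on $\R_+$ (through the diagonal extraction), and the uniform pointwise bound promotes this to $L^1_{loc}(\R_+)$ strong convergence by dominated convergence. The a.e.\ convergence along a subsequence asserted in Theorem \ref{secondtheorem}(1) is then simply the Helly extraction applied to the full family.
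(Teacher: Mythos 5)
Your proof is correct, but it follows a genuinely different route from the paper's. The paper dominates the extended density $\{F_\e\}$ by the solution $G$ of the no-hole problem in the extended phase space (which conserves mass), splits the difference $\iint\{f_\e\}-\int_0^\infty\iint F\,ds$ into a tail part $|x|>R$, a finite part $|x|\le R$, and a limit-tail part, and then invokes velocity averaging (Lemma \ref{averging}) to obtain strong $L^1_{loc}$ convergence of $\int_\Sp\{f_\e\}\,dv$ on the bounded region, with the $L^1$-bound on $G$ controlling both tails. You instead exploit two structural facts about the scalar function $M_\e$ that the paper does not use at all: (i) $M_\e$ is nonincreasing in $t$ — a consequence of the divergence form, $K1=1$, and the sign of the absorbing boundary term — and (ii) tightness in $x$ obtained by splitting $f^{in}$ into a compactly supported piece (handled by finite speed of propagation plus a comparison with the no-hole solution) and a small-$L^1$ piece. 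Helly's selection theorem then upgrades the weak-$*$ convergence $M_\e\rightharpoonup M$ (which you extract directly from Theorem \ref{premiertheoreme} and tightness) to a.e. and hence $L^1_{loc}$ convergence. Your argument is more elementary in that it sidesteps velocity averaging for this particular step, at the price of being specific to the total mass: it would not give strong convergence of more general velocity moments of $\{f_\e\}$, which the paper's velocity-averaging route delivers for free and in fact needs elsewhere (Lemma \ref{averging} is already invoked in the proof of Theorem \ref{premiertheoreme}). One small point worth making explicit: the monotonicity of $M_\e$ should be justified at the level of the mild (Duhamel) solution — e.g.\ via the contraction property of the absorbing transport semigroup on $L^1_+$ — rather than by the formal Green identity, since $f_\e$ is only a mild solution; and the comparison $0\le f_\e^{(1)}\le\tilde f^{(1)}$ with the whole-space Boltzmann solution, needed for the finite-speed argument, should be stated (it follows from the monotonicity of the Duhamel map, exactly as the paper's bound $\{F_\e\}\le G$ does).
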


\begin{proof}
Going back to the proof of Proposition \ref{Xif} (whose notations are kept in the present 
discussion), we have seen that
$$
F_\e=\sum_{n\ge 0}\mathcal{T}^nF_{2,\e}\quad\hbox{Êon }\R_+\times\R_+\times Z_\e\times\Sp\,,
$$
with the notation
$$
F_{2,\e}(t,s,x,v)=\ind_{t<\e\tau_\e(\frac{x}\e,v)}\ind_{t<s}\s e^{-\s s}f^{in}(x-tv,v)\,.
$$
Since $\mathcal{T}\Phi\ge 0$ a.e. whenever $\Phi\ge 0$ a.e., the formula above implies
that
$$
F_\e\le G:=\sum_{n\ge 0}\mathcal{T}^nG_2
	\hbox{ a.e. in }(t,s,x,v)\in\R_+\times\R_+\times Z_\e\times\Sp\,,
$$
where
$$
G_2(t,s,x,v):=\ind_{t<s}\s e^{-\s s}f^{in}(x-tv,v)\,.
$$
Thus, $G$ satisfies the integral equation
$$
G=G_2+\mathcal{T}G
$$
meaning that $G$ is the mild solution of
$$
\left\{
\begin{array}{ll}
(\pl_t+v\cdot\nabla_x+\pl_s)G=-\s G\,,&\quad t,s>0\,,\,\,x\in\R^2\,,\,\,|v|=1\,,
\\	\\
G(t,0,x,v)=\s\displaystyle\int_0^{+\infty}KG(t,s,x,v)ds\,,&\quad t>0\,,\,\,x\in\R^2\,,\,\,|v|=1\,,
\\	\\
G(0,s,x,v)=f^{in}(x,v)\s e^{-\s s}\,,&\quad s>0\,,\,\,x\in\R^2\,,\,\,|v|=1\,,
\end{array}
\right.
$$
Reasoning as in Proposition \ref{Xif} shows that
$$
g(t,x,v):=\int_0^{+\infty}G(t,s,x,v)ds
$$
is the solution of the linear Boltzmann equation
$$
\left\{
\begin{array}{ll}
(\pl_t+v\cdot\nabla_x)g+\s(g-Kg)=0\,,&\quad t>0\,,\,\,x\in\R^2\,,\,\,|v|=1\,,
\\	\\
g(0,x,v)=f^{in}(x,v)\,,&\quad x\in\R^2\,,\,\,|v|=1\,.
\end{array}
\right.
$$
In view of the assumption (\ref{condinitiale}) bearing on $f^{in}$, we know that 
$$
G\ge 0\hbox{ a.e. on }\R_+\times\R_+\times\R^2\times\Sp
$$
and
$$
\begin{aligned}
\int_0^{+\infty}\iint_{\R^2\times\Sp}G(t,s,x,v)dxdvds
&=
\iint_{\R^2\times\Sp}g(t,x,v)dxdv
\\
&=
\iint_{\R^2\times\Sp}f^{in}(x,v)dxdv
\end{aligned}
$$
for each $t\ge 0$.

Summarizing, we have
$$
0\le\{F_\e\}\le G
$$
and
$$
\iiint_{\R_+\times\R^2\times\Sp}G(t,s,x,v)dsdxdv=\iint_{\R^2\times\Sp}f^{in}(x,v)dxdv<+\infty\,.
$$

Then we conclude as follows: for each $R>0$, one has
$$
\begin{aligned}
\iint_{Z_\e\times\Sp}&f_\e(t,x,v)dxdv-\int_0^{+\infty}\iint_{\R^2\times\Sp}F(t,s,x,v)dxdvds
\\
&=
\int_0^{+\infty}\int_{|x|>R}\int_{\Sp}\{F_\e\}(t,s,x,v)dvdxds
\\
&+
\int_0^{+\infty}\int_{|x|\le R}\int_{\Sp}\left(\{F_\e\}-F\right)(t,s,x,v)dvdxds
\\
&-
\int_0^{+\infty}\int_{|x|>R}\int_{\Sp}\{F\}(t,s,x,v)dvdxds=I_{R,\e}(t)+II_{R,\e}(t)+III_{R}(t)\,.
\end{aligned}
$$

First, for a.e. $t>0$, the term $I_{R,\e}(t)\to 0$ as $R\to+\infty$ uniformly in $\e>0$ since $0\le\{F_\e\}\le G$ 
and $G\in L^\infty(\R_+;L^1(\R_+\times\R^2\times\Sp))$. 

Next, the term $II_{R,\e}(t)\to 0$ strongly in $L^1_{loc}(\R_+)$ as $\e\to 0^+$ for each $R>0$ by Lemma
\ref{averging}.

Finally, since $\{F_\e\}\rightharpoonup F$ in $L^1_{loc}(\R_+\times\R_+\times\R^2\times\Sp)$ weak as 
$\e\to 0^+$, one has $0\le\{F\}\le G$, so that $F\in L^\infty(\R_+;L^1(\R_+\times\R^2\times\Sp))$. Hence 
the term $III_R(t)\to 0$ as $R\to+\infty$ for a.e. $t\ge 0$.

Thus we have proved that
$$
\iint_{Z_\e\times\Sp}f_\e(t,x,v)dxdv\to\int_0^{+\infty}\iint_{\R^2\times\Sp}F(t,s,x,v)dxdvds
$$
in $L^1_{loc}(\R_+)$ and therefore for a.e. $t\ge 0$, possibly after extraction of a subsequence of
$\e\to 0^+$.
\end{proof}

\subsection{An integral equation for $M$}

Given a function $\psi$ defined (a.e.) on the half-line $\R_+$, we abuse the notation $\psi\ind_{\R_+}$
to designate its extension by $0$ on $\R^*_-$.

Henceforth we also denote 
$$
\kappa(t):=p(t)\s e^{-\s t}\ind_{t\geq0}.
$$

\begin{Lemma}\label{Mlemmatwo}
The function $M$ defined in (\ref{Mdef}) satisfies the integral equation
$$
M(t)=\kappa*(M\ind_{\R_{+}})(t)+\tfrac{1}{2\pi\s}\kappa(t)\iint_{\R^{2}\times\Sp}f^{in}(x,v)dxdv,\ t\geq0
$$
where $*$ denotes the convolution on the real line.
\end{Lemma}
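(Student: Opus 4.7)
The plan is to derive the integral equation for $M$ directly from the representation of $m$ established in Proposition \ref{renouv}, namely
$$
m(t,s)=\frac{\mu(t,s)}{2\pi}\iint_{\R^{2}\times\Sp}f^{in}(x,v)dxdv,
$$
combined with the mild-solution formula for $\mu$ (implicit in the first paragraph of the proof of that proposition).

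First, I would record the integral identity satisfied by $\mu$: along characteristics of the transport operator $\pl_t+\pl_s$ with damping $B(t,s)=\s-\dot p(\mi)/p(\mi)$, the fact that $\mu$ is a mild solution of the renewal PDE means
$$
\mu(t,s)=\ind_{t<s}\,\s e^{-\s s}p(t)+\ind_{s<t}\,\s e^{-\s s}p(s)\int_0^{+\infty}\mu(t-s,\tau)d\tau,
$$
where the $\ind_{t<s}$-term comes from the initial datum $\mu^{in}(s)=\s e^{-\s s}$ transported to time $t$ (giving the factor $e^{-\s t}p(t)/p(0)=e^{-\s t}p(t)$ times the initial value at $s-t$, which reorganizes into $\s e^{-\s s}p(t)$), and the $\ind_{s<t}$-term comes from the boundary datum at $s=0$ given by $\mu(t-s,0)=\s\int_0^{+\infty}\mu(t-s,\tau)d\tau$ transported a time $s$.

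Next, set $N(t):=\int_0^{+\infty}\mu(t,s)\,ds$ so that $M(t)=\frac{N(t)}{2\pi}\iint f^{in}dxdv$. Integrating the identity above in $s\in(0,+\infty)$ and splitting according to $s\gtrless t$,
$$
N(t)=p(t)\int_t^{+\infty}\s e^{-\s s}ds+\int_0^t\s e^{-\s s}p(s)\Bigl(\int_0^{+\infty}\mu(t-s,\tau)d\tau\Bigr)ds.
$$
The first term is $p(t)e^{-\s t}=\kappa(t)/\s$, and the second is $\int_0^t\kappa(s)N(t-s)ds=\bigl(\kappa*(N\ind_{\R_+})\bigr)(t)$. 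Multiplying through by $\frac{1}{2\pi}\iint f^{in}dxdv$ yields the announced equation for $M$.

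The proof is therefore essentially a Fubini computation once the renewal representation for $\mu$ is in hand; there is no real obstacle. The only point requiring a brief justification is the legitimacy of interchanging the $s$-integration with the $\tau$-integration in the second term, which is valid because $\mu\in L^\infty_{\mathrm{loc}}(\R_+;L^1(\R_+))$ by Proposition \ref{renouv} and $0\le\kappa(s)\le\s e^{-\s s}$ is integrable on $\R_+$. Alternatively, one can bypass $\mu$ entirely and integrate the integral equation for $F$ from Proposition \ref{eqF} directly over $\R^2\times\Sp$, using the identity $\frac{1}{2\pi}\int_{\Sp}KF\,dv=\frac{1}{2\pi}\int_{\Sp}F\,dw$ that follows from the symmetry/normalization (\ref{kdef}) of the scattering kernel; this gives exactly the same integral equation for $m(t,s)$, and integrating once more in $s$ produces the claimed formula.
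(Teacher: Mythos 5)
Your proposal is correct and follows essentially the same route as the paper: derive the characteristic (Duhamel) identity for the pointwise mass density, split at $s\lessgtr t$, integrate in $s$, and recognize the convolution. The only cosmetic difference is that you work with $\mu$ from Proposition~\ref{renouv} and then multiply by the mass constant, whereas the paper applies the method of characteristics directly to $m$; the two are proportional by Proposition~\ref{renouv}, so the computation is the same, and your noted alternative (integrating the mild-solution formula for $F$ from Proposition~\ref{eqF} over $\R^2\times\Sp$ and then in $s$) is in fact exactly what the paper does.
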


\begin{proof}
We apply the same method as for deriving the explicit representation formula for $F$ starting from 
the equation in Corollary $\ref{renouv},$ in order to find an exact formula for $m$. Indeed, by the 
method of characteristics,
\begin{eqnarray*}
m(t,s)&=&\ind_{s<t}p(s)e^{-\s s}m(t-s,0)+\ind_{t<s}p(t)e^{-\s t}m(0,s-t)
\\
&=&\ind_{s<t}p(s)\s e^{-\s s}\int_{0}^{\infty}m(t-s,u)du
\\
&+& \ind_{t<s}p(t)\s e^{-\s s}\tfrac{1}{2\pi}\iint_{\R^{2}\times\Sp}f^{in}(x,v)dxdv\,.
\end{eqnarray*}
The function $m$ satisfies therefore
\begin{equation}
\label{masse}
\begin{aligned}
m(t,s)&=\ind_{s<t}p(s)\s e^{-\s s}M(t-s)
\\
&+\ind_{t< s}p(t)\s e^{-\s s}\frac{1}{2\pi}\iint_{\R^{2}\times\Sp}f^{in}(x,v)dxdv\,.
\end{aligned}
\end{equation}
We next integrate both sides of (\ref{masse}) in $s\in \R_{+}$. By the definition (\ref{Mdef}) of $M$, we
obtain
$$
M(t)=\int_{0}^{t}\s p(s) e^{-\s s}M(t-s)ds+p(t)e^{-\s t}\tfrac{1}{2\pi}\iint_{\R^{2}\times\Sp}f^{in}(x,v)dxdv
$$
a.e. in $t\ge 0$, which is precisely the desired integral equation for $M$:
\begin{equation}
\label{Masset}
M(t)=\int_0^t\kappa(s)M(t-s)ds+\tfrac{1}{2\pi\s}\kappa(t)\iint_{\R^{2}\times\Sp}f^{in}(x,v)dxdv\,.
\end{equation}
\end{proof}
 
\subsection{An explicit representation formula for $M$}

\begin{Lemma}\label{Mlemmathree}
Let $M$ be the function defined in (\ref{Mdef}). Then
$$
M=\tfrac{1}{2\pi\s}\iint_{\R^{2}\times\Sp}f^{in}(x,v)dxdv\sum_{n \geq1}\kappa^{*n}
$$ 
with the notation 
$$
\kappa^{*n}=\underbrace{\kappa*\cdots*\kappa}_{\hbox{$n$ factors}}\,.
$$
\end{Lemma}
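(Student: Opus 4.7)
The plan is to iterate the integral equation for $M$ established in Lemma \ref{Mlemmatwo}. Setting
$$
C_{0}:=\tfrac{1}{2\pi\sigma}\iint_{\R^{2}\times\Sp}f^{in}(x,v)dxdv,
$$
and extending $M$ and $\kappa$ by $0$ on $\R^{*}_{-}$, equation (\ref{Masset}) reads $M=\kappa*M+C_{0}\kappa$. Substituting this expression for $M$ into the convolution on the right-hand side and iterating $n-1$ times yields, by a straightforward induction on $n\ge 1$,
$$
M=C_{0}\sum_{k=1}^{n}\kappa^{*k}+\kappa^{*n}*M.
$$
It then suffices to establish two facts: (a) the series $\sum_{n\ge 1}\kappa^{*n}(t)$ converges, and (b) the remainder $(\kappa^{*n}*M)(t)\to 0$ as $n\to+\infty$ for each $t\ge 0$.

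Both facts rest on the elementary bound $0\le\kappa(t)\le\sigma e^{-\sigma t}$ on $\R_{+}$, which follows from the fact that $0\le p(t)\le 1$ — the upper bound being a consequence of (\ref{BZLim}) since $p(t)|I|$ is the limit of Lebesgue measures of subsets of $[0,1]^{2}\times I$. A short induction on $n$ then gives
$$
0\le\kappa^{*n}(t)\le\frac{\sigma^{n}t^{n-1}}{(n-1)!}e^{-\sigma t},\quad t\ge 0,\,n\ge 1.
$$
In particular
$$
\sum_{n\ge 1}\kappa^{*n}(t)\le\sigma e^{-\sigma t}\sum_{n\ge 1}\frac{(\sigma t)^{n-1}}{(n-1)!}=\sigma,
$$
which proves (a) and shows that the series defines a locally bounded function on $\R_{+}$.

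For (b), I would first note that $M\in L^{\infty}_{loc}(\R_{+})$: this is immediate from Proposition \ref{renouv}, since $\mu\in L^{\infty}([0,T];L^{1}(\R_{+}))$ for every $T>0$ implies that $m\in L^{\infty}([0,T];L^{1}(\R_{+}))$, and hence $M(t)=\int_{0}^{+\infty}m(t,s)ds$ belongs to $L^{\infty}([0,T])$. The same pointwise bound on $\kappa^{*n}$ then gives, for each $t\ge 0$,
$$
|(\kappa^{*n}*M)(t)|\le\|M\|_{L^{\infty}([0,t])}\int_{0}^{t}\kappa^{*n}(s)ds\le\|M\|_{L^{\infty}([0,t])}\frac{(\sigma t)^{n}}{n!}\longrightarrow 0
$$
as $n\to+\infty$. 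Passing to the limit $n\to+\infty$ in the iterated identity yields the claimed representation formula.

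The argument is a standard Picard/Neumann series expansion applied to the renewal equation; the only points requiring care are the uniform bound $p\le 1$ (to ensure factorial decay of $\|\kappa^{*n}\|_{L^{1}([0,t])}$) and the local boundedness of $M$, both of which are already consequences of results established earlier in the paper. I do not anticipate any real obstacle.
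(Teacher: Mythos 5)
Your proof is correct, but it follows a different route from the paper's. You iterate the renewal equation pointwise, writing $M=C_{0}\sum_{k=1}^{n}\kappa^{*k}+\kappa^{*n}*M$ and killing the remainder by the factorial bound $\kappa^{*n}(t)\le\frac{\sigma^{n}t^{n-1}}{(n-1)!}e^{-\sigma t}$, which needs only the crude estimate $p\le 1$ together with the local boundedness of $M$ that you correctly extract from Proposition \ref{renouv}. The paper instead works globally in $L^{1}(\R_{+})$: it first proves the sharper inequality $\|\kappa\|_{L^{1}(\R_{+})}<1$ by integrating $\int_{0}^{\infty}\sigma e^{-\sigma t}p(t)\,dt$ by parts and using that $p$ is decreasing, then shows via Fubini that $M\in L^{1}(\R_{+})$, and finally inverts $I-\mathcal{A}$ (where $\mathcal{A}f=\kappa*(f\ind_{\R_{+}})$) by a Neumann series in $\mathcal{L}(L^{1}(\R_{+}))$. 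Your argument is more elementary — it avoids both the global integrability of $M$ and the strict inequality $\|\kappa\|_{L^{1}}<1$, and it makes the trivial case $\iint f^{in}=0$ unnecessary to single out — but it also establishes less: the paper's route delivers $M\in L^{1}(\R_{+})$ as a byproduct, and the inequality $\int_{0}^{\infty}\kappa\,dt<1$ is independently needed in Lemma \ref{xisb} to locate the characteristic exponent $\xi_{\sigma}$, so the paper gets it out of the way here. Either proof is acceptable for this lemma.
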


\begin{proof}
Observe that
\begin{equation}
\label{intk}
\begin{aligned}
\int_{0}^{+\infty}\kappa(t) dt&=\s \int_{0}^{+\infty} e^{-\s t}p(t)dt 
\\
&=1+\int_{0}^{+\infty}\dot{p}(t)e^{-\s t}dt<1\,,
\end{aligned}
\end{equation}
where the second equality results from integrating by parts the integral defining $\kappa$, and
the final inequality is implied by the fact that $p$ is a $C^1$ decreasing function.

By Lemma \ref{Mlemma}, $M\in L^1_{loc}(\R_+)$ and $M\ge 0$ a.e. on $\R_+$ since $f_\e\ge 0$
a.e. on $\R_+\times Z_\e\times\Sp$ because $f^{in}\ge 0$ a.e. on $\R^2\times\Sp$ --- see the
positivity assumption in (\ref{condinitiale}). Applying the Fubini theorem shows that
$$
\begin{aligned}
\int_0^{+\infty}\!\!M(t)dt\!=\!\int_0^{+\infty}\!\!\int_0^t\kappa(t-s)M(s)dsdt
	\!+\!\tfrac1{2\pi\s}\iint_{\R^2\times\Sp}\!\!f^{in}(x,v)dxdv\!\!\int_0^{+\infty}\kappa(t)dt
\\
=
\int_0^{+\infty}M(s)\left(\int_s^{+\infty}\!\!\kappa(t-s)dt\right)ds
	+\tfrac1{2\pi\s}\iint_{\R^2\times\Sp}f^{in}(x,v)dxdv\!\!\int_0^{+\infty}\kappa(t)dt.
\end{aligned}
$$
In other words
$$
\|M\|_{L^1(\R_+)}\le\|M\|_{L^1(\R_+)}\|\kappa\|_{L^1(\R_+)}
	+\tfrac1{2\pi\s}\iint_{\R^2\times\Sp}f^{in}(x,v)dxdv\,,
$$
so that $M\in L^1(\R_+)$ since $\|\kappa\|_{L^1(\R_+)}<1$, and
$$
\|M\|_{L^1(\R_+)}\le\frac1{2\pi\s(1-\|\kappa\|_{L^1(\R_+)})}\iint_{\R^2\times\Sp}f^{in}(x,v)dxdv\,.
$$

In particular, if
$$
\iint_{\R^2\times\Sp}f^{in}(x,v)dxdv=0
$$
then $M=0$ a.e. on $\R_+$, so that the representation formula to be established obviously
holds in this case.

Otherwise
$$
\iint_{\R^2\times\Sp}f^{in}(x,v)dxdv>0\,;
$$
define then
$$
\psi(t):=2\pi\s\left(\iint_{\R^{2}\times\Sp}f^{in}(x,v)dxdv\right)^{-1}M(t),\ t\geq0\,.
$$ 
According to Lemma \ref{Mlemmatwo}, the function $\psi$ verifies the integral equation
\begin{equation}
\label{Krein}
\psi(t)=(\kappa*(\psi\ind_{\R_{+}}))(t)+\kappa(t)\,,\quad\hbox{ a.e. in }t\ge 0\,.
\end{equation}
Applying the Fubini theorem as above shows that the linear operator
$$
\mathcal{A}:\,L^{1}(\R_{+})\ni f\mapsto\kappa*(f\ind_{\R_+})\in L^{1}(\R_{+})
$$
satisfies
$$
\|\mathcal{A}f\|_{L^1(\R_+)}\le\|\mathcal{A}\|\|f\|_{L^1(\R_+)}\quad
	\hbox{ with }\|\mathcal{A}\|=\int_{0}^{+\infty}\kappa(t) dt<1\,.
$$
Therefore $(1-\mathcal{A})$ is invertible in the class of bounded operators on $L^1(\R_+)$ with
inverse
$$
(1-\mathcal{A})^{-1}=\sum_{n\ge 0}\mathcal{A}^n\,.
$$

In particular
$$
\psi=(I-\mathcal{A})^{-1}\kappa=\sum_{n\ge 1}\kappa^{\star n}
$$
is the unique solution of the integral equation (\ref{Krein}) in $L^1(\R_+)$, which establishes
the representation formula in the lemma.
\end{proof}

\subsection{Asymptotic behavior of $M$ in the long time limit}
 
\subsubsection{The characteristic exponent $\xis$}

\begin{Lemma}
\label{xisb}
For each $\s>0$, the equation 
$$
\int_{0}^{\infty}\s e^{-(\s+\xi)t}p(t)dt=1
$$ 
with unknown $\xi$ has a unique real solution $\xis.$ This solution $\xis$ satisfies 
$$
-\s<\xis<0.
$$
\end{Lemma}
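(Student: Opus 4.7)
The plan is to study the function
$$
\Phi(\xi):=\int_0^{+\infty}\s e^{-(\s+\xi)t}p(t)\,dt
$$
as a function of the real parameter $\xi$, and show that $\Phi$ is a continuous, strictly decreasing bijection of $(-\s,+\infty)$ onto $(0,+\infty)$, so that the equation $\Phi(\xi)=1$ has a unique real root, which moreover lies in $(-\s,0)$ by sandwiching the value $1$ between $\Phi(0)$ and $\lim_{\xi\to-\s^+}\Phi(\xi)$.

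The first step is to observe that $\Phi$ is well-defined on $(-\s,+\infty)$. Indeed, $p$ is continuous and bounded (since $p(0^+)$ is finite from the explicit formula (\ref{BZtwo})--(\ref{BZone}), or simply because $p$ is the limit of probabilities), and by (\ref{BGWBound}) one has $p(t)\le C'/t$ for $t\ge 1$; thus for every $\xi>-\s$ the integrand $e^{-(\s+\xi)t}p(t)$ is integrable on $\R_+$. Differentiation under the integral sign (justified by the same bounds, applied on a small neighborhood of any $\xi_0>-\s$) gives
$$
\Phi'(\xi)=-\int_0^{+\infty}\s\,t\,e^{-(\s+\xi)t}p(t)\,dt<0,
$$
since $p>0$ on $\R_+$. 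Hence $\Phi$ is $C^1$ and strictly decreasing on $(-\s,+\infty)$.

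The second step is to compute the two endpoint values. At $\xi=0$ one has $\Phi(0)=\int_0^{+\infty}\kappa(t)\,dt$, which is $<1$ by the computation (\ref{intk}) already carried out in the proof of Lemma \ref{Mlemmathree} (integration by parts together with the fact that $p$ is decreasing and $p(0)\le 1$). On the other hand, as $\xi\to-\s^+$, the monotone convergence theorem yields
$$
\Phi(\xi)\longrightarrow\s\int_0^{+\infty}p(t)\,dt,
$$
and this last integral is $+\infty$ because of the lower bound $p(t)\ge C/t$ for $t\ge 1$ supplied by (\ref{BGWBound}). Therefore $\Phi$ decreases continuously from $+\infty$ to $\Phi(0)<1$ as $\xi$ runs from $-\s^+$ to $0$, so by the intermediate value theorem there exists a unique $\xis\in(-\s,0)$ with $\Phi(\xis)=1$; strict monotonicity of $\Phi$ on the whole interval $(-\s,+\infty)$ rules out any other real solution.

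No step is a genuine obstacle: the only ingredients are elementary real analysis together with the lower and upper bounds on $p$ from (\ref{BGWBound}) and the estimate (\ref{intk}) already established for $\Phi(0)$. The only point requiring a minimum of care is the justification that $\Phi(\xi)\to+\infty$ as $\xi\to-\s^+$, which rests crucially on the algebraic lower bound $p(t)\ge C/t$ --- this is precisely the manifestation, at the level of the characteristic exponent, of the channel phenomenon responsible for the slow decay of $p$.
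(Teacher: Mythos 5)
Your proof is correct and follows essentially the same path as the paper's: study the Laplace transform $\xi\mapsto\int_0^{+\infty}\s e^{-(\s+\xi)t}p(t)\,dt$ on $(-\s,+\infty)$, show it is strictly decreasing by differentiating under the integral sign, use monotone convergence together with the lower bound in (\ref{BGWBound}) to get divergence to $+\infty$ as $\xi\to-\s^+$, use the computation (\ref{intk}) to get a value $<1$ at $\xi=0$, and conclude by the intermediate value theorem. The only cosmetic difference is that the paper also records the limit $0^+$ as $\xi\to+\infty$, which, as you correctly observe, is not needed to locate the root in $(-\s,0)$.
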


\begin{proof}
Consider the Laplace transform of the function $\kappa$ defined above:
$$
\mathcal{L}[\kappa](\xi):=\int_{0}^{\infty}\s e^{-(\s+\xi)t}p(t)dt.
$$ 
As $0<p\leq1$, $\mathcal{L}[\kappa]$ is of class $C^{1}$ on $]-\s,+\infty[$, and
$$
\dot{\mathcal{L}}[\kappa](\xi)=-\int_{0}^{\infty}\s e^{-(\s+\xi)t}tp(t)dt <0
$$ 
as $p(t)>0$ for each $t\geq0$. The function $\mathcal{L}[\kappa]$ is therefore decreasing on 
$]-\s,+\infty[$.

For each $t>0$, 
$$
\kappa(t)e^{-\xi t}\to 0^{+}\quad\hbox{ as }\xi\rightarrow+\infty\,,
$$
while
$$
\kappa(t)e^{-\xi t}\leq \s e^{-\s t}\quad\hbox{ for each }t\ge 0\,,
$$
since $0<p\le 1$. By dominated convergence, one concludes that
$$
\mathcal{L}[\kappa](\xi)\to 0^{+}\ \mbox{as\ }\xi\rightarrow+\infty.
$$ 

Besides, for each $t>0$, 
$$
\s p(t)e^{-(\s+\xi)t}\uparrow \s p(t)\,,\quad\hbox{ as }\xi\downarrow-\s^{+}\,.
$$
By monotone convergence,
$$
\mathcal{L}[\kappa](\xi)\rightarrow\s\int_0^{+\infty}p(t)dt=+\infty\,,\quad\mbox{as\ }\xi\rightarrow-\s^{+}\,.
$$ 
(Notice that the equality
$$
\int_0^{+\infty}p(t)dt=+\infty
$$
follows from the lower bound in (\ref{BGWBound}).)

By the intermediate value theorem, there exists an unique $\xis>-\s$ such that
$$
\mathcal{L}[\kappa](\xis)=1.
$$ 
Besides $\xis<0$ as $\mathcal{L}[\kappa]$ is decreasing and
$$
\mathcal{L}[\kappa](0)=\int_{0}^{\infty}\kappa(t)dt<\int_0^{+\infty}\s e^{-\s t}dt=1=\mathcal{L}[\kappa](\xis)\,,
$$
which concludes the proof.
\end{proof}

In particular 
$$
t\mapsto\kappa(t)e^{-\xis t}
$$ 
is a decreasing probability density on $\R_+$. 

\subsubsection{The Renewal Equation}

It remains to prove statement (3) in Theorem \ref{secondtheorem}.

First, for each $\lambda\in\R$ and each locally bounded measurable function $f:\R\mapsto\R$ supported
in $\R_+$, denote 
$$
f_{\la}(t):=e^{\la t}f(t)\ \mbox{for each\ }t\in \R\,.
$$ 
Notice that for each such $f,g$, we have
$$
e^{\la t}(f*g)(t)= (f_{\la}*g_{\la})(t)\ \mbox{for each\ }t\in\R.
$$ 
Hence, if $\psi$ is a solution of the integral equation $(\ref{Krein})$, the function $\psi_{-\xis}$ 
satisfies 
\begin{equation}
\label{RenewalEq}
\psi_{-\xis}(t)= (\kappa_{-\xis}*\psi_{-\xis})(t)+\kappa_{-\xis}\,,
\end{equation}
which is a renewal integral equation, in the sense of \cite{Feller}.

Moreover, as noticed above, $\kappa_{-\xis}$ is a decreasing probability density on $\R_{+}$, so
that in particular $\kappa_{-\xis}$ is directly Riemann integrable (see \cite{Feller} pp. 348-349). 
Thus, applying Theorem 2 on p. 349 in \cite{Feller} shows that
\begin{equation}
\label{asymppsi}
\psi(t)e^{-\xis t}\rightarrow \frac{1}{\displaystyle\int_{0}^{\infty}t\kappa(t)e^{-\xis t}dt}
	\quad\mbox{as\ }t\rightarrow+\infty.
\end{equation}
By definition of $\psi$, this is precisely the asymptotic behavior of $M$ in Theorem 2 (3). 

\subsection{Two important limiting cases for $\xis$}

We conclude our proof of Theorem \ref{secondtheorem} with a discussion of the asymptotic behavior 
of $\xis$ (statement (4) of Theorem \ref{secondtheorem}) in the two following regimes: 
\begin{enumerate}
\item the collisionless regime $\s\rightarrow0^{+},$ and

\item the highly collisional regime $\s\rightarrow+\infty.$
\end{enumerate}

\begin{proof}[End of the proof of Theorem \ref{secondtheorem}]
Denote for the sake of simplicity $\ls:=\s+\xis$. Establishing that $\xis\sim -\s$ as $\s\to 0^{+}$ 
amounts to proving that $\ls=o(\s)$. First, notice that, since $-\s<\xis$,
$$
0<\ls<\s
$$ 
so $\ls\to 0^{+}$ as $\s\to 0^{+}$. Keeping this in mind, we have
\begin{equation}
\label{lisa}
\int_{0}^{+\infty}e^{-\ls t}p(t)dt=\frac{1}{\s}
\end{equation}
by definition of $\xis$. Substituting $z=\ls t$ in the integral above, we obtain:
$$
0<\frac{\ls}{\s}=\int_{0}^{+\infty}e^{-z}p(z/\ls)dz.
$$ 
Since $\ls\to 0^{+}$ as $\s\to 0^{+}$ and $p(t)\to 0^{+}$ as $t\to +\infty$, one has $p(z/\ls)\to 0^{+}$ 
as $\s\to 0^{+}$. Besides $0\le e^{-z}p(z/\ls)\le e^{-z}$ so that, by dominated convergence 
$$
\frac{\ls}{\s}\to 0\ \mbox{as\ }\s\to 0^{+}.
$$
This establishes the asymptotic behavior of $\xis$ in the collisionless regime.

As for the highly collisional regime, we return to the equation (\ref{lisa}) defining $\xis$ (written 
in terms of $\ls$):
$$
\begin{aligned}
1&=\s\int_{0}^{+\infty}e^{-\ls t}p(t)dt
\\
&=\ls\int_{0}^{\infty}e^{-\ls t}p(t)dt-\xis\int_{0}^{\infty}e^{-\ls t}p(t)dt
\\
&=1+ \int_{0}^{\infty}e^{-\ls t}\dot{p}(t)dt -\xis\int_{0}^{\infty}e^{-\ls t}p(t)dt
\end{aligned}
$$
where the last equality follows from integrating by parts the first integral on the left hand side. 
Therefore
$$
\xis=\frac{\displaystyle\int_{0}^{\infty}e^{-\ls t}\dot{p}(t)dt}{\displaystyle\int_{0}^{\infty}e^{-\ls t}p(t)dt},
$$ 
or, after substituting $t'=\ls t$,
\begin{equation}
\label{xis}
\xis=\frac{\displaystyle\int_{0}^{\infty}e^{-t}\dot{p}(t/\ls)dt}{\displaystyle\int_{0}^{\infty}e^{-t}p(t/\ls)dt}.
\end{equation}
Equation $(\ref{lisa})$ shows that $\ls\to+\infty$ as $\s\to+\infty$. Passing to the limit in the right-hand 
side of (\ref{xis}), we find, by dominated convergence
$$
\xis\to\frac{\displaystyle\int_{0}^{\infty}e^{-t}\dot{p}(0)dt}{\displaystyle\int_{0}^{\infty}e^{-t}p(0)dt}
	=\dot{p}(0)\quad\hbox{ as }\s\to+\infty\,.
$$ 
Indeed $p$ is decreasing and convex, as can be verified for instance on the Boca-Zaharescu
explicit formula\footnote{In space dimension higher than $2$, one can show that the analogue 
of $p$ is also nonincreasing and convex, by using a variant of a formula due to L.A. Santal\`o 
established in \cite{DDG}, for want of a en explicit formula giving the limiting distribution of free
path lengths.} (\ref{BZone})-(\ref{BZtwo}) for $p$, so that
$$
0\le -\dot{p}(t)\le-\dot{p}(0)\,,\quad\hbox{ for each }t\ge 0\,.
$$
We conclude by observing that the same explicit formulas of Boca-Zaharescu \cite{BZ} imply 
that
$$
\dot{p(0)}=-2\,.
$$
\end{proof}

\section{Final remarks and open problems}

The present work provides a complete description of the homogenization of the linear Boltzmann
equation for monokinetic particles in the periodic system of holes of radius $\e^2$ centered at the
vertices of the square lattice $\e\mathbb{Z}^2$ (Theorem \ref{premiertheoreme}.) In particular, we
have given an asymptotic equivalent of exponential type of the total mass of the particle system
in the long time limit (Theorem \ref{secondtheorem}.)

Since the discussion in the present paper is restricted to the two dimensional setting, it would be
useful to extend the results above to the case of higher space dimensions, and to lattices other
than the square or cubic lattice. Most of the arguments considered here can be adapted to these
more general cases; however, the analogue of the distribution of free path lengths (the function
$p(t)$) is not known explicitly so far. See \cite{EBDimQcq} for these more general cases.

Otherwise, it would also be interesting to investigate other scalings than the Boltzmann-Grad
type scaling considered here --- holes of radius $\e^2$ centered at the vertices of a square
lattice whose fundamental domain is a square of sise $\e$ in the case of space dimension $2$.
Typically, one would like to mix the homogenization procedure considered in the present work
with the assumption of a highly collisional regime $\s\gg 1$, so that the size of the holes and 
the distance between neighboring holes are scaled in a way that differs from the one considered
here. We hope to return to this problem in a forthcoming publication.

Finally, the homogenization result considered in the present paper raises an interesting question, 
of quite general bearing. Usually, homogenization is a limiting process leading to a macroscopic
description of some material that is known at the microscopic scale. In the problem considered
here, it has been necessary to use a more detailed description of the particle system than that
provided by the linear Boltzmann equation (problem ($\Xi_\e$) set in the extended phase space 
that involves the additional variable $s$.)

In other words, the formulation of the macroscopic homogenization limit for the linear Boltzmann
equation considered here involves remnants of an \textit{even more microscopic description} of 
the system than the linear Boltzmann equation itself --- namely the extended phase space and 
the additional variable $s$.
 
We do not know whether this phenomenon (i.e. the need for a more microscopic description of a
system to arrive at the formulation of a homogenized equation for that system) can be observed 
in homogenization problems other than the one considered here --- for instance in the case of 
equations other than those found in context of kinetic theory.


\end{document}